\pgfplotsset{compat=newest}
\tikzset{block/.style={draw,rectangle, align=center, inner sep=5pt},
oval/.style={draw,ellipse, align = center, inner sep=5pt},
decision/.style={draw,diamond, aspect = 2, align=center,inner sep=2pt}}
\newcommand\leftAngle[4]{
  \pgfmathanglebetweenpoints{\pgfpointanchor{#2}{center}}{\pgfpointanchor{#3}{center}}
  \coordinate (tmpRA) at ($(#2)+(\pgfmathresult-45:#4)$);
  \draw[black] ($(#2)!(tmpRA)!(#1)$) -- (tmpRA) -- ($(#2)!(tmpRA)!(#3)$);
}
\def\namedlabel#1#2{\begingroup
   \def\@currentlabel{#2}%
   \label{#1}\endgroup
}
\journal{Applied Numerical Mathematics}
\newcommand{\R}{\mathbb R}
\newcommand{\transpose}{\mathsf{T}}
\DeclareMathOperator{\Ls}{L}
\DeclareMathOperator{\LLs}{\mathbf L}
\DeclareMathOperator{\Hs}{H}
\DeclareMathOperator{\HHs}{\mathbf H}
\DeclareMathOperator{\Ws}{W}
\DeclareMathOperator{\Cs}{C}
\DeclareMathOperator{\CCs}{\mathbf C}
\DeclareMathOperator{\Ds}{D}
\DeclareMathOperator{\Vs}{V}
\DeclareMathOperator{\Xs}{X}
\DeclareMathOperator{\Ys}{Y}
\newcommand{\brac}[1]{\left\lbrace{#1}\right\rbrace}
\newcommand{\paren}[1]{\left({#1}\right)}
\newcommand{\norm}[1]{\left\lVert{#1}\right\rVert}
\newcommand{\abs}[1]{\left\vert{#1}\right\vert}
\newcommand{\inprod}[1]{\left\langle{#1}\right\rangle}
\newcommand{\ovl}[1]{\overline{#1}}
\newcommand{\vertiii}[1]{{\left\vert\kern-0.25ex\left\vert\kern-0.25ex\left\vert #1 
    \right\vert\kern-0.25ex\right\vert\kern-0.25ex\right\vert}}
\newcommand{\dive}[1]{\nabla\cdot{#1}}
\newcommand{\veps}{\varepsilon}
\newcommand{\pa}{\partial}
\newcommand{\Gm}{\Gamma}
\newcommand{\Om}{\Omega}
\newcommand{\vphi}{\varphi}
\newcommand{\fa}{\forall}
\newcommand{\sst}{\subset}
\newcommand{\xb}{\boldsymbol{x}}
\newcommand{\vb}{\mathbf{v}}
\newcommand{\nv}{\textbf{n}}
\newcommand{\dx}{\,\mathrm{d}\xb}
\newcommand{\ds}{\,\mathrm{d}s}
\newcommand{\dt}{\,\mathrm{d}t}
\newcommand{\di}{\,\mathrm{d}}
\newcommand{\q}{\quad}
\newcommand{\qq}{\qquad}
\newcommand{\qqq}{\qquad\quad}
\newcommand{\qqqq}{\qquad\qquad}
\newcommand{\qqqqq}{\qquad\qquad\quad}
\newtheorem{theorem}{Theorem}[section]
\newtheorem{lemma}{Lemma}[section]
\newtheorem{corollary}{Corollary}[section]
\newtheorem{remark}{Remark}[section]
\begin{document}

\begin{frontmatter}



\title{A fitted space-time finite element method for an advection-diffusion problem with moving interfaces}


\author[1]{Quang Huy Nguyen}
\ead{huy.nguyenquang1@hust.edu.vn}

\author[2]{Van Chien Le}
\ead{vanchien.le@ugent.be}

\author[1]{Phuong Cuc Hoang}
\ead{cuc.hp222163m@sis.hust.edu.vn}

\author[1]{Thi Thanh Mai Ta\corref{cor}}
\ead{mai.tathithanh@hust.edu.vn}

\address[1]{Faculty of Mathematics and Informatics, Hanoi University of Science and Technology, 11657 Hanoi, Vietnam.}

\address[2]{IDLab, Department of Information Technology, Ghent University - imec, 9000 Ghent, Belgium.} 

\cortext[cor]{Corresponding author}

\begin{abstract}
This paper presents a space-time interface-fitted finite element method for solving a parabolic advection-diffusion problem with a nonstationary interface. The jumping diffusion coefficient gives rise to the discontinuity of the solution gradient across the interface. We use the Banach-Ne{\v c}as-Babu\v{s}ka theorem to show the well-posedness of the continuous variational problem. A fully discrete finite-element based scheme is analyzed using the Galerkin method and unstructured interface-fitted meshes. An optimal error estimate is established in a discrete energy norm under a globally low but locally high regularity condition. Some numerical results corroborate our theoretical results.
\end{abstract}



\begin{keyword}
moving-interface advection-diffusion problem \sep space-time finite element method \sep interface-fitted mesh \sep a priori error estimate

\MSC[2010] 35K20 \sep 65M15 \sep 65M60
\end{keyword}

\end{frontmatter}


\section{Introduction}
\label{sec:introduction}

\subsection{Problem statement}
Let $\Om \sst \R^d$, with $d = 1$ or $2$, be an open, bounded domain with Lipschitz continuous boundary $\pa\Om$. The domain $\Om$ consists of two time-dependent subdomains $\Om_1(t)$ and $\Om_2(t)$ sharing a common interface $\Gm(t)$. Mathematically speaking
$$
    \Om = \Om_1(t) \cup \Om_2(t) \cup \Gm(t), \qqqqq \qqq \pa\Om_1(t) \cap \pa\Om_2(t) = \Gm(t),
$$ 
for all $t \in [0, T]$, with $T > 0$. The evolution of the interface over time is determined by a velocity vector $\vb \in \Cs([0, T], \CCs^2(\Omega))$ satisfying $\nabla \cdot \vb(\xb, t) = 0$ for all $(\xb, t) \in \Om \times [0, T]$ \cite{VR2018} (see Figure~\ref{fig: model}). 

\begin{figure}[http]
    \centering
    \begin{tikzpicture}
        \draw (0.5,0) circle (2.5cm);
        \draw[->, line width=0.2mm] (-0.6,0) -- (1,0);
        \node at (0.7, 0.3) {$\vb$};
        \node at (0.2, -0.65) {$\Gamma(t)$};
        \node at (1.8, 1.5) {$\Omega_2(t)$};
        \node at (2.7, -2.2) {$\Omega$};
        \node[oval, fill=gray!30] at (-0.6, 0) {$\Omega_1(t)$};
    \end{tikzpicture}
    \caption{The domain $\Omega$ consisting of two subdomains $\Om_1(t)$ and $\Om_2(t)$ moving with a velocity $\vb$, separated by the interface $\Gamma(t)$.}
    \label{fig: model}
\end{figure}
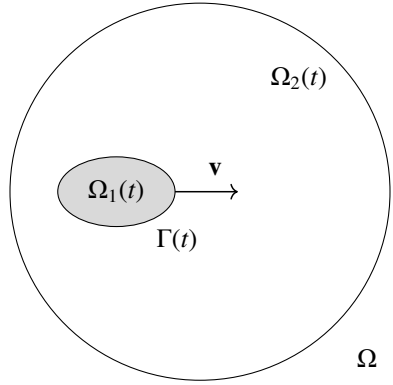

We denote the space-time domain by $Q_T := \Om \times (0, T)$ and its subdomains by
$$
    Q_{i} := \left\{(\xb, t) \mid \xb \in \Omega_{i}(t), \, t \in (0,T)\right\},
$$
with $i = 1, 2$. The two space-time subdomains are separated by the space-time interface 
$$
    \Gamma^{\ast} := \bigcup_{t \in (0,T)} \Gamma(t)\times \left\{t\right\} = \pa Q_1 \cap \pa Q_2.
$$
Throughout the paper, we assume that $\Gamma^\ast$ is a $\Cs^2$-continuous hypersurface in $\mathbb{R}^{d+1}$. 

This paper is concerned with the following problem\footnote{In this paper, we restrict ourselves to the advection-diffusion problem \eqref{eq: IBVP} with non-jumping interface conditions. In general, the interface conditions can be jumping. If this is the case, one can extend the ideas in \cite{CZ1998, ABGL2023} to reduce it to the problem \eqref{eq: IBVP}.}
\begin{equation}
    \label{eq: IBVP}
    \begin{cases}
        \pa_t u + \vb \cdot \nabla u - \dive{\left(\kappa \nabla u\right)} = f \qqq & \text{in} \q Q_T, \\
        \left[u\right] = 0 & \text{on} \q \Gm^\ast, \\
        \left[\kappa \nabla u \cdot \nv\right] = 0 & \text{on} \q \Gm^\ast, \\
        u = 0 & \text{on} \q \pa \Om \times \paren{0, T}, \\
        u (\cdot, 0) = 0 & \text{in} \q \Om,
    \end{cases}
\end{equation}
where $\nv$ is the unit normal to $\Gm(t)$ pointing from $\Om_1(t)$ to $\Om_2(t)$ and $f$ is the given source. The notation $\left[u\right] := \left(u_1\right)_{\mid \Gamma(t)} - \left(u_2\right)_{\mid \Gamma(t)}$ stands for the jump of $u$ across the interface $\Gm(t)$, with $\left(u_i\right)_{\mid \Gamma(t)}$ the limiting value of $u$ on $\Gm(t)$ from $\Om_i(t)\ (i=1,2)$. The first equation in \eqref{eq: IBVP} is usually called an advection-diffusion equation. The second and third conditions describe the behavior of the unknown $u$ and its spatial gradient across the interface $\Gm^\ast$. The problem is supplemented with a homogeneous Dirichlet boundary condition and a homogeneous initial condition. For simplicity, we assume that the diffusion coefficient $\kappa$ is a positive constant on each subdomain, i.e.,
$$
\kappa = 
\begin{cases}
    \kappa_1 > 0 & \text{in} \q \Om_1(t), \\
    \kappa_2 > 0 & \text{in} \q \Om_2(t).
\end{cases}
$$
In practice, the two values $\kappa_1$ and $\kappa_2$ might characterize two different materials occupying two subdomains. Hence, they are generally different, leading to the jump of $\kappa$ across the interface $\Gm(t)$.

\subsection{Relevant literature}

The advection-diffusion problem \eqref{eq: IBVP} arises in various engineering applications of flows and other physical phenomena. The unknown function $u$ may represent the concentration of a pollutant subject to the diffusion effect and is transported by a stream moving at velocity $\vb$. Alternatively, it can represent the electron concentration in an electromagnetic device. In addition, the advection-diffusion equation \eqref{eq: IBVP} can model the eddy-current problem in electromagnetics \cite{LSV2021a}, the heat transfer \cite{Slodicka2021}, or the induction heating process \cite{LSV2024}, which involve moving multiple-component systems.

The difficulties of solving an interface problem are not only the non-smoothness of the solution but also related to the advection component. More specifically, the discontinuity of the diffusion coefficient $\kappa$ at the interface results in a solution with a discontinuous gradient across the interface. Consequently, classical finite element methods applied to this class of problems converge with only sub-optimal orders, e.g., \cite{Babuka1970, CZ1998}. In handling the issue, two major approaches have been extensively investigated over the last few decades: interface-unfitted and interface-fitted methods. The former approach does not require a triangulation that fits the interface. Instead, it accurately approximates discontinuous quantities by adjusting local finite element basis functions on interface elements. Examples of this class include the multiscale finite element method (MsFEM) \cite{CGH2010}, the immersed finite element method (IFEM) \cite{Guo2021}, and the extended finite element method (XFEM) \cite{BB1999}. In contrast, the interface-fitted methods generate a mesh that matches the interface, i.e., the mesh prevents the interface from cutting through an element or resolves the interface approximately to a certain precision \cite{HAC1974, Winslow1966, DLTZ2006}. 

When the interface evolves, we cope with a moving discontinuity. The interface-unfitted approach takes advantage of allowing interface-independent simplicial triangulations, which is convenient when the interface depends on time. In \cite{Zunino2013}, the authors combined the XFEM with a backward Euler time discretization to solve a moving-interface problem. Lehrenfeld and Reusken in \cite{LR2013, Lehrenfeld2015b} proposed a second-order XFEM scheme using a space-time discontinuous Galerkin discretization. More recently, Badia et al. \cite{BDV2023} presented the aggregation finite element method (AgFEM) for tackling the ill-conditioning and small cut cell issues in most interface-unfitted schemes. However, one standing drawback of these interface-unfitted methods is that they require special space-time quadrature rules on cut elements, resulting in high computational costs. In contrast to the success of interface-unfitted schemes, interface-fitted methods have received very little attention in solving moving-interface problems. The reason is that the re-meshing procedure at each time step introduces additional errors in interpolating two consecutive meshes, which can exceed the feasible effort. 

Related to our setting, we also refer to the studies \cite{BMF+2018, LRS2020} that utilized the arbitrary Lagrangian-Eulerian (ALE) technique to solve moving-domain or moving-interface problems. This technique allows for good domain or interface resolution but is less practical when dealing with large deformations. 

\subsection{Contributions and outline}

Our work resolves a challenge of the interface-fitted approach and proposes a new interface-fitted strategy for moving-interface advection-diffusion problems. We introduce a numerical scheme combining a space-time discretization with an interface-fitted mesh continuous Galerkin approximation. In contrast to all mentioned studies invoking the classical time-stepping methods or time-discontinuous Galerkin methods, our approach treats the time variable as another spatial variable. It solves the problem \eqref{eq: IBVP} in the space-time setting. The literature refers to this strategy as the space-time finite element method, introduced by Steinbach in \cite{Steinbach2015} for solving non-interface heat equations. In this way, we can employ fully unstructured space-time meshes and cope with geometrically complicated interfaces. Furthermore, our approach takes advantage of the accuracy and practicality of interface-fitted methods with neither the implementation of costly re-meshing algorithms nor the computation of interaction integrals over moving interfaces or enrichment elements. 

The space-time finite element method has been used in recent studies for solving moving-interface or moving-domain problems \cite{LMN2016, LMS2019, GGS2024}. The authors of these works provided standard error estimates under the assumption that the solution is sufficiently smooth across the interface. This regularity assumption, however, is typically not satisfied in practical situations when the coefficient $\kappa$ is discontinuous. In our work, we instead consider a globally low regularity solution. To rectify the non-smooth property of the solution and derive a priori error analysis for the discretization scheme, we combine the space-time technique \cite{Steinbach2015} with the Stein extension operators \cite{Stein1971}, which allows us to interpolate functions with locally high but globally low regularity. We arrive at an optimal convergence rate with respect to a mesh-dependent energy norm under an appropriate regularity condition. In the case of a weaker assumption, the convergence rate is nearly optimal when the spatial domain is one-dimensional (1D) and sub-optimal for the two-dimensional (2D) spatial domain. To the best of our knowledge, these types of error bounds do not appear anywhere else in the literature on moving-interface problems using the space-time method. 

The paper is divided as follows. The following section provides necessary functional settings, followed by a variational formulation of the problem \eqref{eq: IBVP} in a space-time Sobolev space. The Banach-Ne{\v c}as-Babu\v{s}ka theorem is involved in proving the well-posedness of the continuous variational problem. Section \ref{sec: discretization} is devoted to a Galerkin finite element discretization of this problem. We present the solvability of the discrete problem and derive a priori error estimates in a discrete energy norm under different smoothness assumptions. Some numerical experiments are performed in Section \ref{sec: results} to verify the theoretical analysis. Finally, Section \ref{sec: conclusions} contains some conclusion remarks.

\section{Variational formulation}
\label{sec:var_form}

\subsection{Functional setting}

For $s\ge 0, p\in [1, \infty]$ and a bounded Lipschitz domain $\Omega$, we denote by $\Ws^{s,p}\left(\Omega\right)$ the standard Sobolev space, where $\Ws^{0,p}\left(\Omega\right)$ coincides with the Lebesgue space $\Ls^p\left(\Omega\right)$ \cite[Chapter~2]{Ern2021c}. Among Sobolev spaces, only $\Ws^{s,2}\left(\Omega\right)$ forms a Hilbert space and is usually denoted by $\Hs^s\left(\Omega\right)$. Note that when $\Omega\subset \mathbb{R}^2$, the space $\Hs^1\left(\Omega\right)$ is continuously embedded into $\Ls^{p}\left(\Omega\right)$ for all $p \in [1, \infty)$. When $\Omega\subset \mathbb{R}^3$, this continuous embedding only holds for $p \in [1, 6]$. We also need the compact embedding of $\Hs^s\left(\Omega\right)$ into $\Cs\left(\ovl{\Omega}\right)$ for all $s > \frac{m}{2}$, where $\Omega \subset \mathbb{R}^{m}\ \left(m = 2 \text{ or } 3\right)$ \cite[Section~2.3]{Ern2021c}. 

For $l,k\in \mathbb{N}$ and a space-time domain $Q_T$, we recall from \cite[Section~1.4]{WYW2006} the anisotropic Sobolev space 
$$
\Hs^{l, k}\left(Q_T\right) := \left\{u\in \Ls^2\left(Q_T\right)\mid \partial_{\boldsymbol{x}}^{\boldsymbol{\alpha}} \partial_t^r u \in \Ls^2\left(Q_T\right)\text{ for all } \abs{{\boldsymbol{\alpha}}}\le l,\ 0\le r \le k\right\},
$$
furnished with the graph norm 
$$
    \norm{u}^2_{\Hs^{l, k}\left(Q_T\right)} := \sum_{\abs{{\boldsymbol{\alpha}}}\le l,\ 0\le r \le k} \norm{\partial_{\boldsymbol{x}}^{\boldsymbol{\alpha}} \partial_t^r u}^2_{\Ls^2(Q_T)} \qqqq \forall u \in \Hs^{l, k}\left(Q_T\right).
$$
When $l=k$, we retain the classical Sobolev space $\Hs^k\left(Q_T\right)$. The notation $\Hs^{1,0}_0\left(Q_T\right)$ stands for the closure of $\Cs_0^1\left(Q_T\right)$ with respect to the norm $\norm{\cdot}_{\Hs^{1,0}\left(Q_T\right)}$, where $\Cs_0^1\left(Q_T\right)$ denotes the space of continuously differentiable functions with a compact support in $Q_T$. Since the space $\Hs^{1,0}_0\left(Q_T\right)$ is frequently used below, we shall use the compact notation 
$$
\Ys := \Hs^{1,0}_0\left(Q_T\right),
$$
equipped with the equivalent norm
$$
\norm{u}_{\Ys}^2 := \int\limits_0^T \int\limits_\Om \kappa \abs{\nabla u}^2 \dx \dt\qqqq \forall u \in \Ys.
$$
The dual space of $\Ys$ is denoted by $\Ys^\prime$, and the duality pairing between $\Ys^\prime$ and $\Ys$ is denoted by $\inprod{\cdot, \cdot}_{\Ys^\prime \times \Ys}$.
For $u \in \Ys$, its distributional time derivative $\partial_t u$ is defined by
$$
\partial_t u \left(\psi\right) := - \int\limits_0^T \int\limits_\Om u \partial_t \psi \dx \dt\qqqq \fa \psi \in \Cs_0^1\left(Q_T\right).
$$
Then, we introduce the spaces
$$
\Xs := \brac{u \in \Ys \mid \partial_t u \in \Ys^\prime},\qqqq \Xs_0 := \brac{u \in \Xs \mid u(\cdot, 0) = 0},
$$
endowed with the graph norm 
$$
\norm{u}^2_{\Xs} := \norm{u}_{\Ys}^2 + \norm{\partial_t u}_{\Ys^\prime}^2\qqqq \forall u \in \Xs.
$$
The reader is referred to \cite[Chapter~10]{GR2011} for more details on function spaces defined on a space-time domain. The space $\Xs_0$ is a natural space for the weak solution to the problem \eqref{eq: IBVP}. 

Under mild assumptions on $\Gamma^\ast$ (see \cite[Chapter~10]{GR2011} and \cite{LR2013} for more details), there exist continuous space-time trace operators $\gamma_i: \Hs^{1,0}\left(Q_i\right)\rightarrow \Ls^2\left(\Gamma^\ast\right)\ (i=1,2)$, allowing us to define the space-time jump operator 
\begin{equation}
    \label{eq: space-time jump}
    \left[u\right]_\ast := \gamma_1 u - \gamma_2 u \qqqq \forall u\in \Hs^{1,0}\left(Q_1\cup Q_2\right).
\end{equation}
In what follows, we use $C > 0$ as a generic constant that is independent of the mesh parameter $h$ and the solution $u$ but may depend on the space-time domain $Q_T$, the norm $\norm{\vb}_{\LLs^\infty\left(Q_T\right)}$, and the coefficient $\kappa$. Their different values in different places are allowed.

\subsection{Well-posedness}

Given a source term $f\in \Ys^\prime$, the variational formulation of \eqref{eq: IBVP} reads as: Determine $u \in \Xs_0$ that satisfies
\begin{equation}
    \label{eq:vf}
    a\left(u, \vphi\right) = \inprod{f, \vphi}_{\Ys^\prime \times \Ys} \qqqq \forall \vphi \in \Ys,
\end{equation}
where the bilinear form $a : \Xs_0\times \Ys \to \R$ is defined by
$$
a\left(u, \vphi\right) := \inprod{\partial_t u, \vphi}_{\Ys^\prime \times \Ys} +\int\limits_0^T\int\limits_\Omega  \left(\vb\cdot\nabla u\right)\vphi + \kappa \nabla u \cdot \nabla \vphi  \dx \dt.
$$
In this section, we invoke the Banach-Ne{\v c}as-Babu\v{s}ka theorem to prove the well-posedness of the problem \eqref{eq:vf}. To that end, the following two results are necessary.

\begin{lemma}
    \label{lem: inf_sup}
    There exists a constant $C > 0$ such that
    \begin{equation}
    \label{eq:inf_sup}
        \sup_{\vphi \in \Ys \setminus \brac{0}} \dfrac{a\left(u, \vphi\right)}{\norm{\vphi}_{\Ys}} \ge C \norm{u}_{\Xs} \qqqq \fa u \in \Xs_0.
    \end{equation}
\end{lemma}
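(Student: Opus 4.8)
The plan is to verify the inf-sup bound by exhibiting, for each $u \in \Xs_0$, an explicit test function $\vphi \in \Ys$ for which $a(u,\vphi)$ controls $\norm{u}_\Xs^2$ from below while simultaneously $\norm{\vphi}_\Ys \leq C\norm{u}_\Xs$. Two ingredients are required: one that recovers the elliptic part $\norm{u}_\Ys$ of the graph norm, and one that recovers the time-derivative part $\norm{\partial_t u}_{\Ys'}$. Accordingly I would take $\vphi = u + \delta\,w$, where $w\in\Ys$ encodes $\partial_t u$ and $\delta>0$ is a small parameter fixed at the very end.

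First I would test with $u$ itself. Using the integration-by-parts identity in time, available because $\Xs\emb\Cs([0,T],\Ls^2(\Om))$, together with $u(\cdot,0)=0$, one obtains $\inprod{\partial_t u, u}_{\Ys'\times\Ys} = \tfrac12\norm{u(\cdot,T)}_{\Ls^2(\Om)}^2\geq 0$. The advection contribution drops out: since $\dive{\vb}=0$ and $u(\cdot,t)\in\Hs^1_0(\Om)$ for a.e.\ $t$, a pointwise-in-time integration by parts gives $\int_\Om(\vb\cdot\nabla u)u\dx = 0$. Hence $a(u,u)\geq\norm{u}_\Ys^2$.

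Next, let $w\in\Ys$ be the Riesz representative of $\partial_t u$ with respect to the inner product of $\Ys$, i.e.\ $\int_0^T\int_\Om\kappa\,\nabla w\cdot\nabla\vphi\dx\dt = \inprod{\partial_t u,\vphi}_{\Ys'\times\Ys}$ for all $\vphi\in\Ys$; then $\norm{w}_\Ys = \norm{\partial_t u}_{\Ys'}$ and, taking $\vphi=w$, $\inprod{\partial_t u,w}_{\Ys'\times\Ys}=\norm{w}_\Ys^2$. Testing $a(u,\cdot)$ with $w$, I would bound the two remaining terms by Cauchy--Schwarz: the diffusion term by $\norm{u}_\Ys\norm{w}_\Ys$, and the advection term by $\norm{\vb}_{\LLs^\infty}\norm{\nabla u}_{\Ls^2(Q_T)}\norm{w}_{\Ls^2(Q_T)}$, converting $\norm{w}_{\Ls^2(Q_T)}$ into $\norm{w}_\Ys$ through the spatial Poincar\'e inequality (valid since $w(\cdot,t)\in\Hs^1_0(\Om)$). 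This yields $a(u,w)\geq\norm{w}_\Ys^2 - C\norm{u}_\Ys\norm{w}_\Ys$.

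Finally I would combine the two estimates to get $a(u,u+\delta w)\geq\norm{u}_\Ys^2 + \delta\norm{w}_\Ys^2 - C\delta\norm{u}_\Ys\norm{w}_\Ys$, absorb the cross term by Young's inequality, and then choose $\delta$ small enough that both quadratic coefficients stay strictly positive, giving $a(u,u+\delta w)\geq c\,(\norm{u}_\Ys^2 + \norm{\partial_t u}_{\Ys'}^2) = c\norm{u}_\Xs^2$. Since $\norm{u+\delta w}_\Ys\leq C\norm{u}_\Xs$ by the triangle inequality, dividing yields \eqref{eq:inf_sup}. The main obstacle is the third step: constructing the correct representative of $\partial_t u$ and controlling the nonsymmetric transport term in $a(u,w)$ uniformly, which is precisely what dictates the admissible scaling of $\delta$; a secondary technical point is the time-trace identity in the first step, which rests on the (standard but nontrivial) continuity-in-time embedding for $\Xs$.
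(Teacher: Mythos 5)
Your proof is correct and follows essentially the same route as the paper: you construct the same Riesz representative of $\partial_t u$ in $\Ys$ (the paper calls it $z$, you call it $w$), establish $a(u,u)\ge\norm{u}_{\Ys}^2$ via the same time integration by parts and the vanishing of the divergence-free advection term, and bound the cross terms by Cauchy--Schwarz before absorbing them. The only cosmetic difference is the parametrization of the test function --- you take $\vphi = u+\delta w$ with $\delta$ small while the paper takes $\vphi = z+\lambda u$ with $\lambda$ large --- which is equivalent up to rescaling since the quotient $a(u,\vphi)/\norm{\vphi}_{\Ys}$ is invariant under scaling of $\vphi$.
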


\begin{proof}
    For each $u \in \Xs_0$, we denote by $z \in \Ys$ the unique solution to the auxiliary equation
    \begin{equation}
        \label{eq:inf_sup 1}
        \int\limits_0^T\int\limits_\Omega \kappa \nabla z \cdot \nabla \phi \dx \dt = \inprod{\partial_t u, \phi}_{\Ys^\prime \times \Ys} \qqqq \fa \phi \in \Ys.
    \end{equation}
    From Riesz's representation theorem, it is clear that $\norm{z}_{\Ys} = \norm{\partial_t u}_{\Ys^\prime}$. The idea is to choose $\vphi = z + \lambda u \in \Ys$ in \eqref{eq:inf_sup} with sufficiently large $\lambda > 0$. We immediately have
    $$
    a\left(u, \vphi\right) = a\left(u, z + \lambda u\right) = a\left(u, z\right) + \lambda a\left(u, u\right).
    $$
    For the first term, we use the Cauchy inequality to arrive at
    $$
    \begin{aligned}
        a\left(u, z\right) &= \int\limits_0^T\int\limits_\Omega \kappa \nabla z \cdot \nabla z \dx \dt + \int\limits_0^T\int\limits_\Omega \left(\vb\cdot\nabla u\right)z + \kappa \nabla u \cdot \nabla z \dx \dt \\
        &\ge \norm{z}^2_{\Ys} - \norm{\vb}_{\LLs^\infty\left(Q_T\right)}\left(\dfrac{C}{4\veps}\norm{u}^2_{\Ys} + \veps\norm{z}^2_{\Ls^2\left(Q_T\right)}\right) - \left(\dfrac{1}{4\veps}\norm{u}^2_{\Ys} + \veps\norm{z}^2_{\Ys}\right)\\
        &\ge \left(1 - C\veps\right) \norm{z}^2_{\Ys} - \dfrac{C}{\veps} \norm{u}^2_{\Ys},
    \end{aligned}
    $$
    for any $\veps>0$. We invoked the inequality $\norm{z}_{\Ls^2\left(Q_T\right)}\le C \norm{z}_{\Ys}$ for all $z\in \Ys$ in the final step. In handling the term $a\left(u,u\right)$, we first deal with the advection part by utilizing the divergence theorem. Noting that $\nabla\cdot \vb (\xb, t)= 0$ for all $\left(\xb, t\right)\in \Omega\times [0,T]$ and $u = 0$ on $\pa\Om \times (0, T)$, we have
    \begin{equation}
        \label{eq: inf_sup 2}
        \begin{aligned}
            \int\limits_0^T\int\limits_\Omega \left(\vb\cdot \nabla u\right) u \dx\dt 
            & = \dfrac{1}{2}\int\limits_0^T\int\limits_\Omega \nabla\cdot \left(u^2\vb\right)-u^2 \left(\nabla\cdot \vb\right)\dx\dt \\
            & = \dfrac{1}{2}\int\limits_0^T\int\limits_\Omega \nabla\cdot \left(u^2\vb\right)\dx\dt \\
            & = \dfrac{1}{2}\int\limits_0^T \int\limits_{\partial\Omega} u^2\vb\cdot\nv_{\Omega}\ds\dt \\
            & = 0,
        \end{aligned}
    \end{equation}
    with $\nv_{\Omega}$ the unit outward normal to $\partial\Omega$. Then, we apply the integration by parts formula \cite[Lemma~7.3]{Roubicek2005} to get
    $$
    \begin{aligned}
        a\left(u, u\right) &= \inprod{\partial_t u, u}_{\Ys^\prime \times \Ys} +\int\limits_0^T\int\limits_\Omega \left(\vb\cdot \nabla u\right) u + \kappa \nabla u \cdot \nabla u \dx \dt\\
        &=\dfrac{1}{2}\norm{u\left(\cdot, T\right)}^2_{\Ls^2\left(\Omega\right)} - \dfrac{1}{2}\norm{u\left(\cdot, 0\right)}^2_{\Ls^2\left(\Omega\right)} + \norm{u}_{\Ys}^2\\
        &\ge \norm{u}_{\Ys}^2,
    \end{aligned}
    $$
    owing to $u\left(\cdot,0\right)=0$. Therefore, we end up with
    $$
    a\left(u, \vphi\right) \ge \left(1 - C\veps\right) \norm{z}^2_{\Ys} + \paren{\lambda - \dfrac{C}{\veps}} \norm{u}^2_{\Ys}=\left(1 - C\veps\right)\norm{\partial_t u}^2_{\Ys^\prime} + \paren{\lambda - \dfrac{C}{\veps}} \norm{u}^2_{\Ys}.
    $$
    We hence fix a small enough $\veps > 0$, then select an appropriate $\lambda > 0$ to obtain
    $$
    a\left(u, \vphi\right) \ge C \norm{u}^2_{\Xs}.
    $$
    In addition, we have 
    $$
    \norm{\vphi}_{\Ys} = \norm{z + \lambda u}_{\Ys} \le \norm{z}_{\Ys} + \lambda \norm{u}_{\Ys} = \norm{\partial_t u}_{\Ys^\prime} + \lambda \norm{u}_{\Ys} \le C\norm{u}_{\Xs},
    $$
    which leads to 
    $$
    \sup_{\vphi \in \Ys \setminus \brac{0}} \dfrac{a(u, \vphi)}{\norm{\vphi}_{\Ys}} \ge  C\norm{u}_{\Xs} \qqqq \fa u \in \Xs_0.
    $$
\end{proof} 

\begin{lemma}
    \label{lem: bnb2}
    If $a(u, \vphi)=0$ for all $u\in \Xs_0$, then $\vphi= 0$ in $\Ys$.
\end{lemma}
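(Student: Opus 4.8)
The plan is to read the statement as the second (injectivity) condition of the Banach--Ne\v{c}as--Babu\v{s}ka theorem and to prove it by showing that any such $\vphi$ is a weak solution of the homogeneous \emph{backward-in-time adjoint problem}, which an energy estimate then forces to vanish. First I would restrict the hypothesis to the test functions $u \in \Cs_0^\infty\paren{Q_T} \subset \Xs_0$. For such $u$ the time derivative $\partial_t u$ is an $\Ls^2$-function, so $\inprod{\partial_t u, \vphi}_{\Ys^\prime\times\Ys} = \int_0^T\int_\Om \partial_t u\, \vphi\dx\dt$, and $a(u,\vphi)=0$ becomes, after transferring every derivative onto $\vphi$ distributionally and using $\dive{\vb}=0$, the identity $\inprod{-\partial_t\vphi - \vb\cdot\nabla\vphi - \dive{\paren{\kappa\nabla\vphi}},\, u} = 0$ for all $u$. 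Hence $\vphi$ satisfies $-\partial_t\vphi - \vb\cdot\nabla\vphi - \dive{\paren{\kappa\nabla\vphi}} = 0$ in $\mathcal{D}^\prime\paren{Q_T}$. Because $\vb\cdot\nabla\vphi \in \Ls^2\paren{Q_T}\emb\Ys^\prime$ and $\dive{\paren{\kappa\nabla\vphi}} \in \Ys^\prime$ (since $\phi\mapsto -\int_0^T\int_\Om\kappa\nabla\vphi\cdot\nabla\phi\dx\dt$ is bounded on $\Ys$), this shows $\partial_t\vphi\in\Ys^\prime$, i.e. $\vphi\in\Xs$; in particular the time traces $\vphi\paren{\cdot,0}$ and $\vphi\paren{\cdot,T}$ are well defined in $\Ls^2\paren{\Om}$ through the embedding $\Xs\emb\Cs\paren{[0,T],\Ls^2\paren{\Om}}$.

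Next I would recover the terminal condition. Taking now an arbitrary $u\in\Xs_0$, applying the integration-by-parts-in-time formula \cite[Lemma~7.3]{Roubicek2005} together with spatial integration by parts in the advection and diffusion terms, and invoking $u\paren{\cdot,0}=0$ and the adjoint equation just established, the identity $a(u,\vphi)=0$ collapses to $\int_\Om u\paren{\cdot,T}\,\vphi\paren{\cdot,T}\dx = 0$. Choosing $u\paren{\xb,t} = \tfrac{t}{T}\,g\paren{\xb}$ with $g$ ranging over the dense subspace $\Hs_0^1\paren{\Om}$ of $\Ls^2\paren{\Om}$ then yields $\vphi\paren{\cdot,T}=0$.

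Finally, I would test the adjoint equation against $\vphi$ itself: the advection term vanishes exactly as in \eqref{eq: inf_sup 2}, the diffusion term returns $\norm{\vphi}_\Ys^2$, and the integration-by-parts-in-time formula contributes $\tfrac12\norm{\vphi\paren{\cdot,0}}_{\Ls^2\paren{\Om}}^2$ since $\vphi\paren{\cdot,T}=0$. This gives $\tfrac12\norm{\vphi\paren{\cdot,0}}^2_{\Ls^2\paren{\Om}} + \norm{\vphi}_\Ys^2 = 0$, whence $\norm{\vphi}_\Ys = 0$ and therefore $\vphi = 0$.

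The main obstacle is the regularity bootstrap in the first step: a priori $\vphi$ lies only in $\Ys$, so neither its distributional time derivative nor its time traces exist, yet every manipulation afterwards (the time integration-by-parts formula, the very meaning of $\vphi\paren{\cdot,T}$) hinges on first promoting $\vphi$ to $\Xs$ by verifying that each term of the adjoint equation belongs to $\Ys^\prime$. One should also check that the bounded, piecewise-constant $\kappa$ and the interior interface $\Gm^\ast$ do not disrupt the Gelfand-triple structure behind $\Xs\emb\Cs\paren{[0,T],\Ls^2\paren{\Om}}$; this holds because $\norm{\cdot}_\Ys$ is equivalent to the standard $\Hs^{1,0}_0$-norm and the interface is internal, but it merits an explicit remark.
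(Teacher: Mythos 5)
Your proof is correct and takes essentially the same route as the paper's: test against smooth compactly supported $u$ to bootstrap $\vphi\in\Xs$, recover the terminal condition $\vphi\left(\cdot,T\right)=0$ via the time integration-by-parts formula, and conclude with the energy identity in which the advection term vanishes as in \eqref{eq: inf_sup 2}. The only cosmetic differences are your explicit ``adjoint equation'' phrasing and the concrete choice of test functions $u=\left(t/T\right)g$ for the terminal condition, both of which the paper leaves implicit.
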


\begin{proof}
    For an arbitrary $u \in \Cs_0^1\left(Q_T\right)\subset \Xs_0$ and $\vphi \in \Ys$, the classical argument yields
    $$\inprod{\partial_t u, \vphi}_{\Ys^\prime \times \Ys} =\int\limits_0^T\int\limits_\Omega \partial_t u\vphi \dx\dt.$$
    In particular, by taking $\vphi \in \Ys$ such that
    \begin{equation}
        \label{eq: bnb2 1}
        a\left(u, \vphi\right)= \inprod{\partial_t u, \vphi}_{\Ys^\prime \times \Ys} +\int\limits_0^T\int\limits_\Omega \left(\vb\cdot\nabla u\right)\vphi + \kappa \nabla u \cdot \nabla \vphi \dx \dt = 0 \qqq \forall u \in \Xs_0,
    \end{equation}
    we arrive at
    $$\int\limits_0^T\int\limits_\Omega \partial_t u\vphi \dx\dt =  -\int\limits_0^T\int\limits_\Omega \left(\vb\cdot\nabla u\right)\vphi + \kappa \nabla u \cdot \nabla \vphi \dx \dt  \qqq \forall u \in \Cs_0^1\left(Q_T\right).$$
    For $\vphi\in \Ys$ and $u \in \Cs_0^1\left(Q_T\right)$, the left-hand side defines the distributional derivative $-\partial_t \vphi \left(u\right)$. Moreover, as the right-hand side is a bounded linear functional of $u$ on $\Ys$, we imply that $\partial_t \vphi\in \Ys^\prime$, i.e., $\vphi \in \Xs$. Then, we use the density of $\Cs_0^1(Q_T)$ in $\Ys$ to obtain
    \begin{equation}
        \label{eq: bnb2 2}
        -\inprod{\partial_t \vphi, u}_{\Ys^\prime \times \Ys} =  -\int\limits_0^T\int\limits_\Omega \left(\vb\cdot\nabla u\right)\vphi + \kappa \nabla u \cdot \nabla \vphi \dx \dt \qqq \forall u \in \Ys.
    \end{equation}
    Especially, for all $u\in \Xs_0$, we subtract \eqref{eq: bnb2 1} from \eqref{eq: bnb2 2} and invoke the integration by parts formula to get $\vphi\left(\cdot, T\right)=0$. Next, we choose $u = \vphi \in \Xs$ in \eqref{eq: bnb2 2} to have
    $$\dfrac{1}{2}\norm{\vphi\left(\cdot, 0\right)}^2_{\Ls^2(\Om)}+\int\limits_0^T\int\limits_\Omega \left(\vb\cdot\nabla \vphi\right)\vphi \dx\dt + \norm{\vphi}^2_{\Ys}=0,$$
    which means that $\vphi = 0\in \Ys$. Note that the second term vanishes by using  \eqref{eq: inf_sup 2}. We finish the proof.
\end{proof}

The well-posedness of the variational problem \eqref{eq:vf} is a direct consequence of Lemmas \ref{lem: inf_sup} and \ref{lem: bnb2}, according to the Banach-Ne{\v c}as-Babu\v{s}ka theorem \cite[Theorem~25.9]{Ern2021b}.

\begin{theorem}
    Given $f\in \Ys^\prime$. There exists a unique solution $u 
    \in \Xs_0$ to the problem \eqref{eq:vf} such that 
    $$
    \norm{u}_{\Xs}\le C \norm{f}_{\Ys^\prime}.
    $$
\end{theorem}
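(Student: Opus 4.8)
The plan is to verify the hypotheses of the Banach-Ne{\v c}as-Babu\v{s}ka theorem \cite[Theorem~25.9]{Ern2021b} with trial space $\Xs_0$ and test space $\Ys$, and then to read off both the existence-uniqueness assertion and the stability bound directly from the abstract statement. Both $\Xs_0$ and $\Ys$ are Hilbert spaces, hence Banach; in particular $\Ys$ is reflexive, as the theorem requires. The right-hand side $\vphi \mapsto \inprod{f, \vphi}_{\Ys^\prime \times \Ys}$ is, by the very definition of $\Ys^\prime$, a bounded linear functional on $\Ys$ with dual norm exactly $\norm{f}_{\Ys^\prime}$.

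The only structural hypothesis not already supplied by the two preceding lemmas is the continuity of the bilinear form $a$ on $\Xs_0 \times \Ys$, and this is the one estimate I would carry out. I would bound the three contributions separately. The time-derivative pairing obeys $\smabs{\inprod{\partial_t u, \vphi}_{\Ys^\prime \times \Ys}} \le \norm{\partial_t u}_{\Ys^\prime} \norm{\vphi}_{\Ys} \le \norm{u}_{\Xs}\norm{\vphi}_{\Ys}$ directly from the duality pairing and the definition of $\norm{\cdot}_{\Xs}$. The diffusion term $\int_0^T\int_\Omega \kappa \nabla u \cdot \nabla \vphi \dx\dt$ is bounded by $\norm{u}_{\Ys}\norm{\vphi}_{\Ys}$ via the Cauchy-Schwarz inequality in the $\kappa$-weighted inner product that defines $\norm{\cdot}_{\Ys}$. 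For the advection term I would estimate $\smabs{\vb \cdot \nabla u} \le \norm{\vb}_{\LLs^\infty\left(Q_T\right)}\smabs{\nabla u}$ and then use the lower bound on $\kappa$ together with the Poincar\'e-type inequality $\norm{\vphi}_{\Ls^2\left(Q_T\right)} \le C\norm{\vphi}_{\Ys}$ already invoked in the proof of Lemma \ref{lem: inf_sup}. Summing gives $\smabs{a(u,\vphi)} \le C\norm{u}_{\Xs}\norm{\vphi}_{\Ys}$, so $a$ is bounded.

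With continuity established, the two remaining hypotheses are precisely the preceding lemmas. Lemma \ref{lem: inf_sup} is the inf-sup condition (bnb1), and Lemma \ref{lem: bnb2} is the nondegeneracy condition (bnb2), asserting that the only $\vphi \in \Ys$ annihilating $a(\cdot,\vphi)$ on all of $\Xs_0$ is $\vphi = 0$. The Banach-Ne{\v c}as-Babu\v{s}ka theorem then yields a unique $u \in \Xs_0$ solving \eqref{eq:vf}. For the a priori estimate I would divide the inf-sup inequality \eqref{eq:inf_sup} and substitute \eqref{eq:vf}, obtaining $C\norm{u}_{\Xs} \le \sup_{\vphi \in \Ys\setminus\brac{0}} a(u,\vphi)/\norm{\vphi}_{\Ys} = \sup_{\vphi \in \Ys\setminus\brac{0}} \inprod{f,\vphi}_{\Ys^\prime\times\Ys}/\norm{\vphi}_{\Ys} \le \norm{f}_{\Ys^\prime}$, which is the claimed bound after relabeling the constant.

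I expect no genuine obstacle, since the two lemmas already carry the essential inf-sup and nondegeneracy content. The only piece of real work is the continuity estimate for $a$, and within it the advection term, whose control of $\vphi$ in $\Ls^2\left(Q_T\right)$ rests on the Poincar\'e inequality rather than on the $\Ys$-seminorm directly; everything else is a transcription of the lemmas into the abstract BNB hypotheses.
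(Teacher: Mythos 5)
Your proposal is correct and follows essentially the same route as the paper, which simply invokes the Banach-Ne{\v c}as-Babu\v{s}ka theorem \cite[Theorem~25.9]{Ern2021b} together with Lemmas \ref{lem: inf_sup} and \ref{lem: bnb2}. The only additions you make -- the explicit continuity estimate for $a$ on $\Xs_0 \times \Ys$ and the derivation of the bound $\norm{u}_{\Xs} \le C\norm{f}_{\Ys^\prime}$ from the inf-sup condition -- are precisely the details the paper's one-sentence proof leaves implicit, and both are carried out correctly.
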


\section{Finite element discretization}
\label{sec: discretization}

\subsection{Space-time interface-fitted method}
\label{subsec: interface-fitted method}

Let us assume that $\Omega$ is a polyhedron in $\mathbb{R}^{d}$, with $d = 1$ or $2$. We denote by $\mathcal{T}_h$ a space-time interface-fitted decomposition of $Q_T = \Om \times (0, T)$ into shape-regular simplicial finite elements with the mesh size $h$ \cite{CZ1998}. Hence, every triangle or tetrahedron $K \in \mathcal{T}_h$ falls into one of the following scenarios (see {Figure~\ref{fig:K}}):

\begin{enumerate}
    \item $K \subset \overline{Q_1}$;
    \item $K \subset \overline{Q_2}$;
    \item $K \cap Q_1 \neq \varnothing$ and $K \cap Q_2 \neq \varnothing$. In this case, $d+1$ vertices of $K$ lie on $\Gamma^\ast$.
\end{enumerate}
\begin{figure}[!htp]
    \centering
    \subfigure[A space-time domain $Q_T$.]{\includegraphics[width=0.4\linewidth]{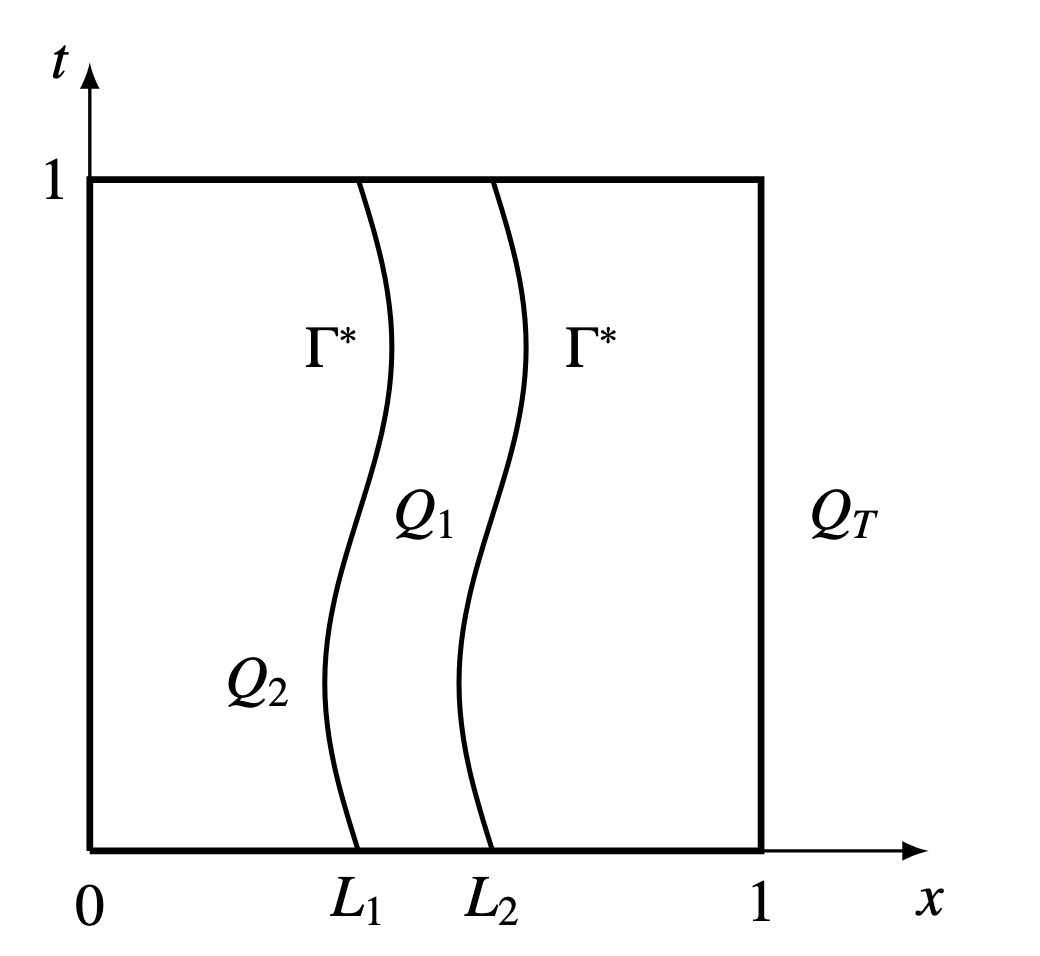}} \q
    \subfigure[The domain $Q_T$ after discretization.]{\includegraphics[width=0.5\linewidth]{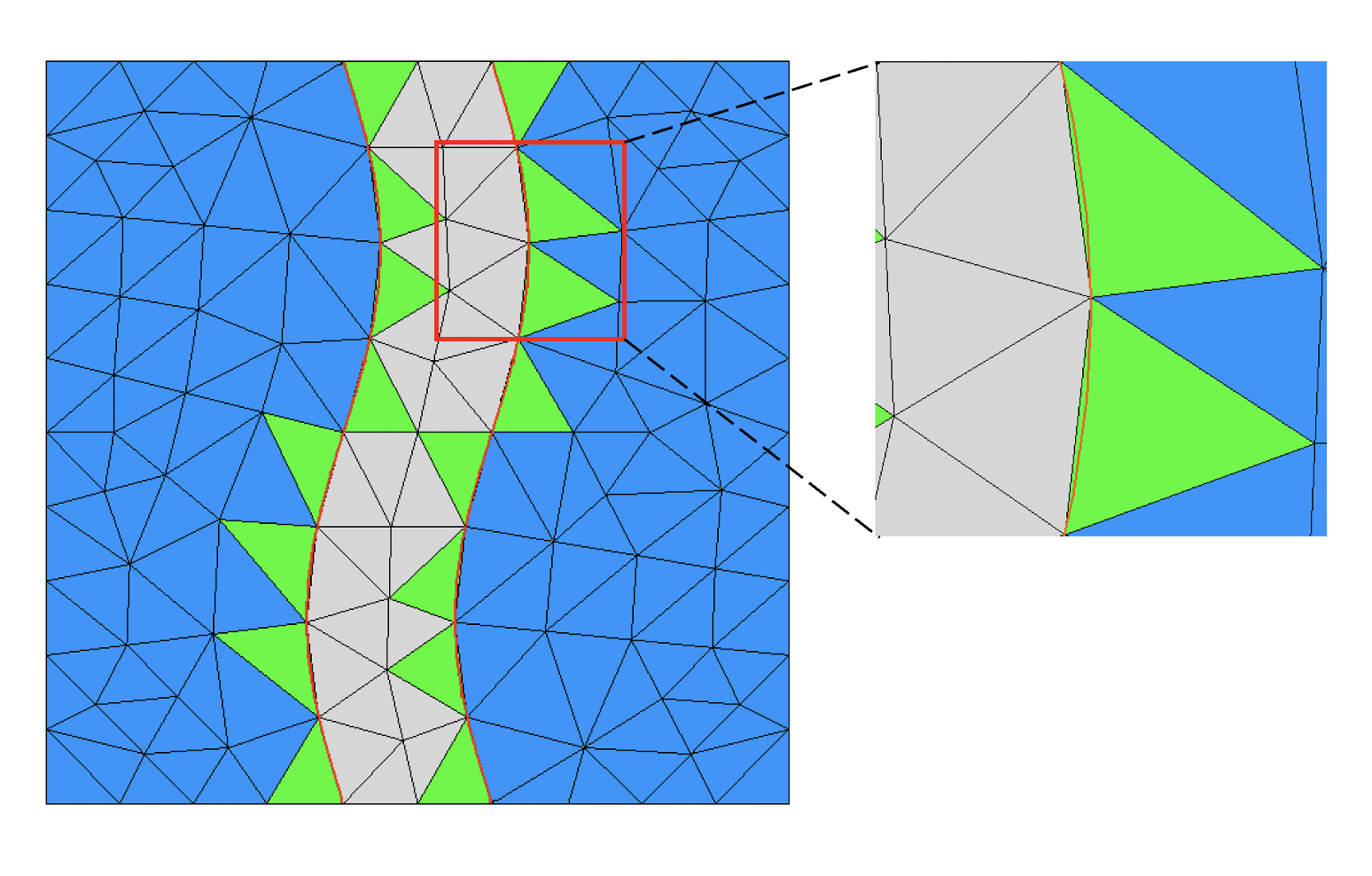}} 
    \caption{Illustration of a space-time domain $Q_T\subset \mathbb{R}^2$ and its space-time interface-fitted decomposition: The gray elements are in $\overline{Q_1}$, the blue elements are in $\overline{Q_2}$, and the green ones intersect both $Q_1$ and $Q_2$.}
    \label{fig:K}
\end{figure}

Moreover, we suppose that the mesh $\mathcal{T}_h$ is quasi-uniform \cite[Definition~22.20]{Ern2021c}. We denote by $\Gamma_h^\ast$ the linear approximation to $\Gamma^\ast$, consisting of edges or faces with all vertices lying on $\Gamma^\ast$. The space-time domain $Q_T$ is separated by $\Gamma_h^\ast$ into two regions $Q_{1,h}$ and $Q_{2,h}$, which linearly approximate $Q_1$ and $Q_2$, respectively.

Let $\Vs_h$ be the finite element space of continuous element-wise linear functions on $\mathcal{T}_h$ with zero values on $\partial \Omega\times (0,T)$ and $\Omega \times \{0\}$. Obviously, $\Vs_h\subset \Xs_0$ and $\Vs_h\subset \Ys$. We consider the following discrete variational problem of \eqref{eq:vf}: Identify $u_h\in \Vs_h$ that solves
\begin{equation}
    \label{eq: discrete vf}
    a_h\left(u_h, \vphi_h\right) = \inprod{f, \vphi_h}_{\Ys^\prime \times \Ys} \qqqq \forall \vphi_h\in \Vs_h,
\end{equation}
where the bilinear form $a_h: \Xs_0 \times \Ys \to \mathbb{R}$ is defined by
$$
a_h\left(u, \vphi\right) := \inprod{\partial_t u, \vphi}_{\Ys^\prime \times \Ys} + \int\limits_0^T\int\limits_\Omega \left(\vb\cdot \nabla u\right)\vphi +  \kappa_h \nabla u \cdot \nabla \vphi \dx \dt,
$$
with $\kappa_h$ the mesh-dependent approximation of $\kappa$
$$
\kappa_h\left(\xb, t\right) := 
\begin{cases}
    \kappa_{1} & \text {for } \, (\xb, t) \in Q_{1,h}, \\ 
    \kappa_{2} & \text {for } \, (\xb, t) \in Q_{2,h}.
\end{cases}
$$
The bilinear form $a_h\left(\cdot, \cdot\right)$ differs from $a\left(\cdot, \cdot\right)$ in the coefficient $\kappa_h$ substituting $\kappa$. For establishing the solvability of the problem \eqref{eq: discrete vf}, we introduce the following seminorm associated with $\kappa_h$
$$
\vertiii{v}^2 : = \sum_{i=1}^2\int\limits_{Q_{i,h}} \kappa_i \abs{\nabla v}^2 \dx \dt \qqqq \fa v \in \Hs^{1,0}\left(Q_{1,h}\cup Q_{2,h}\right).
$$
Please note that this seminorm is defined on the space $\Hs^{1,0}\left(Q_{1,h}\cup Q_{2,h}\right)$, which allows a jump of functions across the interface $\Gm^\ast$. When restricted on $\Ys$, it defines an equivalent norm, also denoted by $\vertiii{\cdot}$. In practice, this norm is more favorable than $\norm{\cdot}_{\Ys}$ since it involves the approximation $\kappa_h$, which is a constant on each element. Next, we consider the following auxiliary problem: For $v \in \Hs^{1}\left(Q_{1,h}\cup Q_{2,h}\right)$, determine $z_h\left(v\right)\in \Vs_h$ such that
\begin{equation}
    \label{eq: auxiliary elliptic}
    \int\limits_0^T\int\limits_\Omega \kappa_h \nabla z_h\left(v\right) \cdot \nabla \phi_h \dx \dt = \sum_{i=1}^2\int\limits_{Q_{i,h}} \partial_t v \, \phi_h \dx\dt \qqqq \forall \phi_h\in \Vs_h.
\end{equation}
It is clear that the problem \eqref{eq: auxiliary elliptic} has a unique solution. Choosing $\phi_h = z_h\left(v\right)$ gives us
$$
\vertiii{z_h \left(v\right)}^2 = \sum_{i=1}^2\int\limits_{Q_{i,h}} \partial_t v \, z_h \left(v\right) \dx\dt \le \norm{\partial_tv}_{\Ls^2\left(Q_{1,h}\cup Q_{2,h}\right)}\norm{z_h\left(v\right)}_{\Ls^2\left(Q_T\right)}\le C\norm{\partial_tv}_{\Ls^2\left(Q_{1,h}\cup Q_{2,h}\right)}\vertiii{z_h \left(v\right)},
$$
where we invoked the following property in the last step
\begin{equation}
    \label{eq: L2 and Y norm comparision}
    \norm{v}_{\Ls^2\left(Q_T\right)}\le C \vertiii{v} \qqqq \forall v \in \Ys.
\end{equation}
We arrive at the useful inequality
\begin{equation}
    \label{eq: discrete and continuous norm comparision}
    \vertiii{z_h\left(v\right)} \le C\norm{\partial_tv}_{\Ls^2\left(Q_{1,h}\cup Q_{2,h}\right)}\qqqq \forall v \in \Hs^{1}\left(Q_{1,h}\cup Q_{2,h}\right).
\end{equation}
Now, we can introduce the following mesh-dependent norm analog to $\norm{\cdot}_{\Xs}$
\begin{equation}
    \label{eq: discrete norms 2}
    \vertiii{v}^2_\ast := \vertiii{v}^2 + \vertiii{z_h\left(v\right)}^2 \qqqq \fa v \in \Hs^{1}\left(Q_{1,h}\cup Q_{2,h}\right).
\end{equation}
Hence, the stability condition for the bilinear form $a_h\left(\cdot, \cdot\right)$ shall be established with respect to the discrete norm $\vertiii{\cdot}_\ast$. The following result is a discrete version of the continuous inf-sup condition \eqref{eq:inf_sup}.

\begin{lemma}
There exists a constant $C > 0$ such that
    \begin{equation}
    \label{eq: discrete inf-sup}
        \sup_{\vphi_h \in \Vs_h\setminus \{0\}} \dfrac{a_h\left(u_h, \vphi_h\right)}{\vertiii{\vphi_h}} \ge C \vertiii{u_h}_\ast \qqqq \fa u_h \in \Vs_h.
    \end{equation}
\end{lemma}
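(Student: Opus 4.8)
The plan is to mirror the proof of the continuous inf-sup condition in Lemma \ref{lem: inf_sup}, replacing the auxiliary elliptic problem \eqref{eq:inf_sup 1} by its discrete counterpart \eqref{eq: auxiliary elliptic} and the norms $\norm{\cdot}_\Ys$, $\norm{\cdot}_\Xs$ by their mesh-dependent analogues $\vertiii{\cdot}$ and $\vertiii{\cdot}_\ast$. Given $u_h \in \Vs_h$, I would take the explicit test function $\vphi_h := z_h(u_h) + \lambda u_h \in \Vs_h$ for a parameter $\lambda > 0$ to be fixed, where $z_h(u_h)$ solves \eqref{eq: auxiliary elliptic} with $v = u_h$. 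Linearity then gives $a_h(u_h, \vphi_h) = a_h(u_h, z_h(u_h)) + \lambda\, a_h(u_h, u_h)$, and I would estimate the two contributions separately.

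For the first term, the crucial observation is that $u_h$ is globally continuous and piecewise linear, so its distributional time derivative carries no jump across $\Gm_h^\ast$ and $\inprod{\partial_t u_h, \phi_h}_{\Ys^\prime \times \Ys} = \sum_{i=1}^2 \int_{Q_{i,h}} \partial_t u_h\, \phi_h \dx \dt$ for every $\phi_h \in \Vs_h$. Testing \eqref{eq: auxiliary elliptic} with $\phi_h = z_h(u_h)$ then identifies $\inprod{\partial_t u_h, z_h(u_h)}_{\Ys^\prime \times \Ys} = \vertiii{z_h(u_h)}^2$, exactly as $\norm{z}_\Ys^2 = \inprod{\partial_t u, z}_{\Ys^\prime \times \Ys}$ in the continuous case. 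The remaining advection and cross-diffusion contributions to $a_h(u_h, z_h(u_h))$ are controlled by the Cauchy-Schwarz and Young inequalities: using that $\kappa_h$ is bounded below by $\min\{\kappa_1, \kappa_2\} > 0$, so that $\norm{\nabla u_h}_{\Ls^2(Q_T)} \le C\vertiii{u_h}$, together with the norm comparison \eqref{eq: L2 and Y norm comparision}, I expect to reach $a_h(u_h, z_h(u_h)) \ge (1 - C\veps)\vertiii{z_h(u_h)}^2 - \tfrac{C}{\veps}\vertiii{u_h}^2$ for every $\veps > 0$.

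For the second term, the advection integral $\int_0^T\int_\Omega (\vb \cdot \nabla u_h) u_h \dx \dt$ vanishes by the same divergence-free computation as in \eqref{eq: inf_sup 2}, since $u_h \in \Ys$ vanishes on $\pa\Om \times (0,T)$ and $\dive{\vb} = 0$; and the integration-by-parts formula together with $u_h(\cdot, 0) = 0$ yields $\inprod{\partial_t u_h, u_h}_{\Ys^\prime \times \Ys} = \tfrac{1}{2}\norm{u_h(\cdot, T)}_{\Ls^2(\Om)}^2 \ge 0$, whence $a_h(u_h, u_h) \ge \vertiii{u_h}^2$. Combining the two estimates gives $a_h(u_h, \vphi_h) \ge (1 - C\veps)\vertiii{z_h(u_h)}^2 + (\lambda - \tfrac{C}{\veps})\vertiii{u_h}^2$; fixing $\veps$ small and then $\lambda$ large produces $a_h(u_h, \vphi_h) \ge C(\vertiii{z_h(u_h)}^2 + \vertiii{u_h}^2) = C\vertiii{u_h}_\ast^2$ by the definition \eqref{eq: discrete norms 2}. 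Since the triangle inequality also gives $\vertiii{\vphi_h} \le \vertiii{z_h(u_h)} + \lambda\vertiii{u_h} \le C\vertiii{u_h}_\ast$, dividing and taking the supremum over $\vphi_h \in \Vs_h \setminus \{0\}$ closes the argument.

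I anticipate the main obstacle to be the first estimate, specifically the clean identification $\inprod{\partial_t u_h, z_h(u_h)}_{\Ys^\prime \times \Ys} = \vertiii{z_h(u_h)}^2$: one must justify that the element-wise time derivative of the continuous finite element function $u_h$ represents its distributional derivative with no interfacial jump contribution across $\Gm_h^\ast$, so that the right-hand side of the discrete auxiliary problem \eqref{eq: auxiliary elliptic} genuinely matches the duality pairing appearing in $a_h$. Once this link is in place, the remainder is a faithful transcription of Lemma \ref{lem: inf_sup}, with \eqref{eq: L2 and Y norm comparision} and the lower bound on $\kappa_h$ taking over the role of the continuous embedding estimates.
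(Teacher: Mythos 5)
Your proposal is correct and follows essentially the same route as the paper's own proof: the same test function $\vphi_h = z_h(u_h) + \lambda u_h$, the same splitting $a_h(u_h, z_h) + \lambda\, a_h(u_h, u_h)$, the same Cauchy--Young estimates with \eqref{eq: L2 and Y norm comparision}, and the same divergence-free and integration-by-parts arguments for $a_h(u_h,u_h)$. The point you flag as the main obstacle---that the element-wise time derivative of the continuous piecewise-linear $u_h$ is its distributional derivative, so the right-hand side of \eqref{eq: auxiliary elliptic} matches the duality pairing in $a_h$---is exactly the (routine) fact the paper uses implicitly when it writes $a_h(u_h,\vphi_h)$ with $\partial_t u_h$ under the integral.
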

                    
\begin{proof}
    The proof is analogous to the continuous problem. For all $u_h,\vphi_h\in\Vs_h$, it holds that
    $$
    a_h\left(u_h, \vphi_h\right) = \int\limits_0^T\int\limits_\Omega \left(\partial_t u_h + \vb\cdot \nabla u_h\right)\vphi_h +  \kappa_h \nabla u_h \cdot \nabla \vphi_h \dx \dt.
    $$
    We choose $\vphi_h := z_h + \lambda u_h$ for some sufficiently large $\lambda > 0$ and arrive at
    $$
    a_h\left(u_h, \vphi_h\right) = a_h\left(u_h, z_h\right) + \lambda a_h\left(u_h, u_h\right),
    $$
    where $z_h := z_h\left(u_h\right)\in \Vs_h$ is the solution to the problem \eqref{eq: auxiliary elliptic}. By using the Cauchy inequality and \eqref{eq: L2 and Y norm comparision}, we have
    $$
    \begin{aligned}
        a_h\left(u_h, z_h\right) &= \vertiii{z_h}^2 + \int\limits_0^T\int\limits_\Omega \left(\vb\cdot\nabla u_h\right)z_h + \kappa_h \nabla u_h \cdot \nabla z_h \dx \dt \\
        &\ge \vertiii{z_h}^2 - \norm{\vb}_{\LLs^\infty\left(Q_T\right)}\left(\dfrac{C}{4\veps}\vertiii{u_h}^2 + \veps\norm{z_h}^2_{\Ls^2\left(Q_T\right)}\right) - \left(\dfrac{1}{4\veps}\vertiii{u_h}^2 + \veps\vertiii{z_h}^2\right)\\
        &\ge \left(1 - C\veps\right) \vertiii{z_h}^2 - \dfrac{C}{\veps} \vertiii{u_h}^2,
    \end{aligned}
    $$
    for any $\veps > 0$. On the other hand, we apply the integration by parts formula and proceed as in \eqref{eq: inf_sup 2} to obtain
    $$
    a_h\left(u_h, u_h\right) =\dfrac{1}{2}\norm{u_h\left(\cdot, T\right)}^2_{\Ls^2\left(\Omega\right)} + \vertiii{u_h}^2\ge \vertiii{u_h}^2.
    $$
    Therefore, we can bound the discrete bilinear form $a_h\left(u_h, \vphi_h\right)$ as follows
    $$
    a_h\left(u_h, \vphi_h\right) \ge \left(1 - C\veps\right) \vertiii{z_h}^2 + \paren{\lambda - \dfrac{C}{\veps}} \vertiii{u_h}^2 \ge C\vertiii{u_h}^2_\ast.
    $$
    Finally, we invoke the estimate $\vertiii{\vphi_h} = \vertiii{z_h+\lambda u_h}\le \vertiii{z_h} + \lambda \vertiii{u_h}\le C \vertiii{u_h}_\ast$ to complete the proof.
\end{proof}

The unique solvability of the discrete problem \eqref{eq: discrete vf} follows from the discrete Banach-Ne{\v c}as-Babu\v{s}ka theorem \cite[Theorem~26.6]{Ern2021b}.

\begin{theorem}
    Given $f\in \Ys^{\prime}$. There exists a unique solution $u_h\in \Vs_h$ to the problem \eqref{eq: discrete vf}.
\end{theorem}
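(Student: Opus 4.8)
The plan is to obtain existence and uniqueness directly from the discrete Banach-Ne{\v c}as-Babu\v{s}ka theorem, whose only substantive hypothesis---the discrete inf-sup condition---has just been verified in the preceding lemma. There is, in fact, no genuine obstacle left: all the real work went into establishing \eqref{eq: discrete inf-sup}, and the remaining task is merely to confirm that the abstract theorem applies to the present configuration.

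First I would record the structural facts that make the theorem applicable. The space $\Vs_h$ is finite-dimensional and serves simultaneously as the trial and the test space in \eqref{eq: discrete vf}, and the inclusions $\Vs_h \subset \Xs_0$ and $\Vs_h \subset \Ys$ (noted earlier) guarantee that $a_h(u_h, \vphi_h)$ is well-defined for all $u_h, \vphi_h \in \Vs_h$ and that the map $\vphi_h \mapsto \inprod{f, \vphi_h}_{\Ys^\prime \times \Ys}$ is a bounded linear functional on $\Vs_h$, since $f \in \Ys^\prime$. Moreover, both $\vertiii{\cdot}$ and $\vertiii{\cdot}_\ast$ restrict to genuine norms on $\Vs_h$: indeed $\vertiii{\cdot}$ is an equivalent norm on $\Ys$, while $\vertiii{v}^2_\ast = \vertiii{v}^2 + \vertiii{z_h(v)}^2 \ge \vertiii{v}^2$ by \eqref{eq: discrete norms 2}.

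Next I would check the two hypotheses of the discrete Banach-Ne{\v c}as-Babu\v{s}ka theorem \cite[Theorem~26.6]{Ern2021b}. The first is precisely the discrete inf-sup estimate \eqref{eq: discrete inf-sup} of the preceding lemma, with the trial space normed by $\vertiii{\cdot}_\ast$ and the test space by $\vertiii{\cdot}$. The second hypothesis requires that the trial and test spaces have equal finite dimension; this holds trivially because both are the single space $\Vs_h$. With both hypotheses in hand the theorem yields a unique $u_h \in \Vs_h$ solving \eqref{eq: discrete vf}.

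For completeness I would remark that the same conclusion admits an elementary linear-algebra reading. Expressing \eqref{eq: discrete vf} in a basis of $\Vs_h$ produces a square linear system, and \eqref{eq: discrete inf-sup} forces its system matrix to be injective: if $a_h(u_h, \vphi_h) = 0$ for all $\vphi_h \in \Vs_h$, then the supremum on the left-hand side of \eqref{eq: discrete inf-sup} vanishes, whence $\vertiii{u_h}_\ast = 0$ and therefore $u_h = 0$. For a square system injectivity and surjectivity coincide, so existence and uniqueness of $u_h$ follow at once. This second viewpoint makes transparent why the equal-dimension condition alone upgrades the inf-sup estimate into full well-posedness here.
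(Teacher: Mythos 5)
Your proof is correct and follows essentially the same route as the paper, which simply invokes the discrete Banach-Ne{\v c}as-Babu\v{s}ka theorem \cite[Theorem~26.6]{Ern2021b} together with the just-established discrete inf-sup condition \eqref{eq: discrete inf-sup}. Your added linear-algebra remark (injectivity of a square system implies bijectivity) is just the elementary content of that theorem spelled out, so nothing differs in substance.
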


\subsection{Auxiliary results}

We emphasize again that the bilinear form $a_h\left(\cdot, \cdot\right)$ is non-conformal to $a\left(\cdot, \cdot\right)$, since the mesh $\mathcal{T}_h$ is non-conformal to $Q_1$ and $Q_2$ in the discrepancy region. In order to establish the desired a priori error estimates, we need some auxiliary estimates of functions in this region. For $i =1,2$, the mismatch between $Q_i$ and $Q_{i,h}$ is defined by
$$
S_h^1 := Q_{1,h}\setminus \ovl{Q_1} = Q_2\setminus \ovl{Q_{2,h}},\qqqq S_h^2 := Q_{2,h}\setminus \ovl{Q_2} = Q_1\setminus \ovl{Q_{1,h}},
$$
and $S_h := S_h^1\cup S_h^2$ (see Figure~\ref{fig: mismatch region}). We define by $\mathcal{T}_h^\ast = \left\{K\in \mathcal{T}_h\mid K\cap \Gamma^\ast \ne \varnothing\right\}$ the set of all interface elements. The following lemma collects some geometry approximation properties of interface elements from Lemma 3 in \cite{BK1996} and Lemma 3.3.4 in \cite{Feis1987}. The results were proved with $d=1$ and can be extended to the case $d=2$.

\begin{figure}[!htp]
    \centering
    \includegraphics[width=0.3\textwidth]{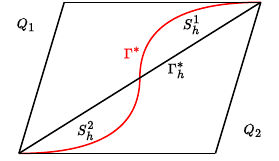}
    \caption{Illustration of the discrepancy region $S_h = S_h^1\cup S_h^2$ with $d=1$. The red curve represents the space-time interface $\Gamma^\ast$, the black diagonal represents the discrete interface $\Gamma_h^\ast$.}
    \label{fig: mismatch region}
\end{figure}

\begin{lemma}
    \label{lem: geometry appximation}
    Let $K\in \mathcal{T}^{\ast}_h$ and assume that $\Gamma^\ast$ is of class $\Cs^2$. Then, the mesh $\mathcal{T}_h$ resolves the space-time interface $\Gamma^\ast$ to the second order, i.e.,
    $$
    \operatorname{dist}\left(\Gamma_h^\ast\cap K; \Gamma^\ast\cap K\right) \le Ch^2.
    $$
    For all $u\in \Hs^1\left(S_h\right)$, there exists a constant $C > 0$ independent of $u$ and $h$ such that
    \begin{equation}
        \label{eq: poincare inequality}
        \norm{u}_{\Ls^2\left(S_h\cap K\right)} \le C\left(h\norm{u}_{\Ls^2\left(\Gamma^\ast\cap K\right)} + h^2\norm{\Ds u}_{\LLs^2\left(S_h\cap K\right)}\right),
    \end{equation}
    where we see the trace of $u$ on $\Gamma^\ast$ from $S_h$ and $\Ds :=\left(\nabla , \partial_t \right)^\top$ is the space-time gradient operator. It holds for the area or volume of the discrepancy region $S_h$ that
    \begin{equation}
        \label{eq: discrepancy region}
        \abs{S_h} \le C h^{2}.
    \end{equation}
\end{lemma}

A technical difficulty arises from the mismatch between $\mathcal{T}_h$ and the subdomains $Q_1$ and $Q_2$ is that the gradient of $u$ jumps across $\Gamma^\ast$ whereas the discrete functions exhibit a similar discontinuity over the approximated interface $\Gamma_h^\ast$. To deal with the mismatch and analyze the bilinear forms in the discrepancy region $S_h$, we follow the ideas in \cite{FJR2023} employing some appropriate extension operators. In particular, for any fixed $s \ge 0$, let $u \in \Hs^s\left(Q_1 \cup Q_2\right)$ and $u_i:=u_{\mid Q_i} \in \Hs^s\left(Q_i\right)$ be its restriction to the subdomain $Q_i\ (i=1,2)$. We employ the smooth Stein extensions $\tilde{u}_i \in \Hs^s\left(Q_T\right)\ (i=1,2)$ to the whole domain $Q_T$. These extensions have the following properties
\begin{equation}
    \label{eq: extension operator}
    \tilde{u}_i=u_i \quad \text { in } Q_i, \quad \norm{\tilde{u}_i}_{\Hs^s\left(Q_T\right)} \leq C\norm{u_i}_{\Hs^s\left(Q_i\right)}, \quad i=1,2,
\end{equation}
given that $\Gamma^\ast$ is Lipschitz continuous (see \cite[Section~VI.3.1]{Stein1971}). We define a map $\pi: \Hs^s\left(Q_1 \cup Q_2\right) \rightarrow \Hs^s\left(Q_{1,h} \cup Q_{2,h}\right)$ by
\begin{equation}
    \label{eq: auxiliary function}
    \pi u \left(\xb, t\right) := 
    \begin{cases}
        \tilde{u}_1\left(\xb, t\right)  & \text{for } \, \left(\xb, t\right) \in Q_{1,h}, \\
        \tilde{u}_2\left(\xb, t\right)  & \text{for } \,\left(\xb, t\right) \in Q_{2,h} .
    \end{cases}
\end{equation}
Please note that $\pi u$ can be discontinuous across $\Gamma_h^\ast$. The following lemma looks at the difference between $u$ and $\pi u$. By definition, $u - \pi u$ vanishes everywhere except $S_h$, and it belongs to $\Hs^s(S_h)$. 

\begin{lemma}
    Let $u\in \Hs^2\left(Q_1\cup Q_2\right)$ and $\pi u \in \Hs^2\left(Q_{1,h}\cup Q_{2,h}\right)$ be the function defined in \eqref{eq: auxiliary function}. There exists a constant $C > 0$ such that
    \begin{equation}
        \label{eq: discrepancy difference in u}
        \norm{\Ds\left(u-\pi u\right)}_{\LLs^2\left(S_h\right)}\le C h \norm{u}_{\Hs^2\left(Q_1\cup Q_2\right)}.
    \end{equation}
\end{lemma}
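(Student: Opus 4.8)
The plan is to reduce the estimate to controlling the space-time gradient of a single globally-$\Hs^2$ function over the thin strip $S_h$. On $S_h^1 = Q_{1,h}\setminus\ovl{Q_1} = Q_2\setminus\ovl{Q_{2,h}}$ every point lies in $Q_2$, so $u = u_2 = \tilde u_2$ there, while $\pi u = \tilde u_1$ because the point lies in $Q_{1,h}$; on $S_h^2$ the roles are exchanged. Hence, setting $w := \tilde u_1 - \tilde u_2 \in \Hs^2\left(Q_T\right)$, we have $\Ds\left(u - \pi u\right) = \pm\,\Ds w$ pointwise almost everywhere on $S_h$, so that $\norm{\Ds\left(u-\pi u\right)}_{\LLs^2\left(S_h\right)} = \norm{\Ds w}_{\LLs^2\left(S_h\right)}$. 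It therefore suffices to show $\norm{\Ds w}_{\LLs^2\left(S_h\right)} \le Ch\norm{u}_{\Hs^2\left(Q_1\cup Q_2\right)}$.

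The crucial step is to apply the Poincar\'e-type inequality \eqref{eq: poincare inequality} not to $w$ itself but to each first-order derivative $\partial_j w$, which belongs to $\Hs^1\left(S_h\right)$ since $w\in\Hs^2$. On every interface element $K\in\mathcal{T}_h^\ast$ this gives, after summing over the $d+1$ space-time directions $j$,
$$
\norm{\Ds w}_{\LLs^2\left(S_h\cap K\right)} \le C\left(h\norm{\Ds w}_{\LLs^2\left(\Gamma^\ast\cap K\right)} + h^2\norm{\Ds^2 w}_{\LLs^2\left(S_h\cap K\right)}\right),
$$
where $\Ds^2 w$ collects all second-order space-time derivatives. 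Squaring, summing over all $K\in\mathcal{T}_h^\ast$, and enlarging the integration domains to $\Gamma^\ast$ and $Q_T$ respectively yields
$$
\norm{\Ds w}^2_{\LLs^2\left(S_h\right)} \le C\left(h^2\norm{\Ds w}^2_{\LLs^2\left(\Gamma^\ast\right)} + h^4\norm{\Ds^2 w}^2_{\LLs^2\left(Q_T\right)}\right).
$$

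Finally, I would bound the two terms. The second is of higher order: $h^4\norm{\Ds^2 w}^2_{\LLs^2\left(Q_T\right)} \le h^4\norm{w}^2_{\Hs^2\left(Q_T\right)}$. For the first and dominant term, the continuity of the trace map $\Hs^1\left(Q_T\right)\to\Ls^2\left(\Gamma^\ast\right)$ applied componentwise to $\Ds w$ gives $\norm{\Ds w}_{\LLs^2\left(\Gamma^\ast\right)} \le C\norm{\Ds w}_{\Hs^1\left(Q_T\right)} \le C\norm{w}_{\Hs^2\left(Q_T\right)}$. The Stein extension bound \eqref{eq: extension operator} then controls both right-hand sides through $\norm{w}_{\Hs^2\left(Q_T\right)} \le \norm{\tilde u_1}_{\Hs^2\left(Q_T\right)} + \norm{\tilde u_2}_{\Hs^2\left(Q_T\right)} \le C\norm{u}_{\Hs^2\left(Q_1\cup Q_2\right)}$, and collecting everything produces the claimed estimate. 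The main obstacle, and the heart of the argument, is recognizing that \eqref{eq: poincare inequality} must be invoked on the gradient $\Ds w$ rather than on $w$: only then does the boundary term carry a single factor $h$ multiplying a trace of first derivatives that stays uniformly bounded by $\norm{u}_{\Hs^2}$, which is exactly what converts the $\Hs^2$ regularity together with the $\OO(h^2)$ thinness of $S_h$ (cf. \eqref{eq: discrepancy region}) into the desired $\OO(h)$ gradient bound. Note that the jump condition $\left[u\right]_\ast = 0$ is never used, since we only differentiate $w$ and never rely on $w$ vanishing on $\Gamma^\ast$.
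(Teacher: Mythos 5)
Your proof is correct and takes essentially the same route as the paper: both arguments apply the Poincar\'e-type inequality \eqref{eq: poincare inequality} to the first-order space-time derivatives of the discrepancy (rather than to the function itself) on each interface element $K\in\mathcal{T}_h^\ast$, sum over the elements, and then control the resulting trace term on $\Gamma^\ast$ by the trace inequality combined with the Stein extension bound \eqref{eq: extension operator}. Your explicit identification $u-\pi u = \pm\left(\tilde u_1 - \tilde u_2\right)$ on $S_h$ is just a cleaner bookkeeping of what the paper does implicitly when it traces $\Ds u$ and $\Ds \pi u$ separately on $\Gamma^\ast$.
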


\begin{proof}
    For all $K\in \mathcal{T}_h^\ast$, we use \eqref{eq: poincare inequality} to obtain
    $$
    \norm{\Ds\left(u-\pi u\right)}^2_{\LLs^2\left(S_h\cap K\right)} \le C\left(h^2 \norm{\Ds\left(u-\pi u\right)}^2_{\LLs^2\left(\Gamma^\ast\cap K\right)} + h^4 \norm{\Ds^2\left(u-\pi u\right)}^2_{\LLs^2\left(S_h\cap K\right)}\right).
    $$
    We sum over all elements $K\in \mathcal{T}_h^\ast$. For the integral on $\Gamma^\ast$, we apply the trace inequality and \eqref{eq: extension operator} to arrive at
    $$
    \norm{\Ds\left(u-\pi u\right)}^2_{\LLs^2\left(\Gamma^\ast\right)} \le 2\left(\norm{\Ds u}^2_{\LLs^2\left(\Gamma^\ast\right)} + \norm{\Ds\pi u}^2_{\LLs^2\left(\Gamma^\ast\right)}\right)\le C \norm{u}^2_{\Hs^2\left(Q_1\cup Q_2\right)},
    $$
    where the constant $C$ depends on $Q_1$ and $Q_2$, but not on $Q_{1, h}$ and $Q_{2, h}$. We finish the proof.
\end{proof}

We also recall the interpolation estimate for a Lagrangian interpolant. Note that for $u\in \Hs^2\left(Q_1 \cup Q_2\right)$, the Sobolev embedding \cite[Theorem~2.35]{Ern2021c} implies that $u\in \Cs\left(\overline{Q_1}\right)\cap \Cs\left(\overline{Q_2}\right)$. In addition, if the interface condition $[u]_\ast = 0$ is satisfied for almost all $\left(\xb, t\right) \in \Gamma^\ast$, we have $u \in \Cs\left(\overline{Q_T}\right)$. Here, $[u]_\ast$ denotes the space-time jump \eqref{eq: space-time jump} of $u$ across $\Gamma^\ast$. Let $I_h: \Cs\left(\overline{Q_T}\right) \rightarrow \Vs_h$ be the classical linear interpolation operator associated with the space $\Vs_h$. Since $\mathcal{T}_h$ matches perfectly to $\Gamma^\ast_h$, we can extend the classical theories in \cite[Section~3.1]{Ciarlet2002} to handle $\pi u$ without destroying the optimal order. 

\begin{lemma}
    \label{lem: interpolation error}
    For all $u\in \Hs^2\left(Q_1 \cup Q_2\right)$ such that $\left[u\right]_{\ast}=0$ and $\pi u \in \Hs^2\left(Q_{1,h}\cup Q_{2,h}\right)$, the interpolation operator $I_h$ satisfies the following approximability property
    \begin{equation}
        \label{eq: interpolation}
        \norm{\pi u - I_h u}_{\Ls^2\left(Q_T\right)} + h \norm{\Ds\left(\pi u - I_h u\right)}_{\LLs^2\left(Q_{1,h}\cup Q_{2,h}\right)} \le C h^2 \norm{u}_{\Hs^2\left(Q_1\cup Q_2\right)}.
    \end{equation}
\end{lemma}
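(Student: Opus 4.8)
The plan is to reduce the interpolation error for the piecewise-defined, globally discontinuous function $\pi u$ to the classical element-wise estimate applied to the smooth Stein extensions $\tilde u_1,\tilde u_2\in\Hs^2\left(Q_T\right)$. The starting observation is purely geometric: since $\Gamma_h^\ast$ is a union of entire mesh faces, each simplex $K\in\mathcal{T}_h$ lies on a single side of $\Gamma_h^\ast$, so there is an index $i=i(K)\in\{1,2\}$ with $K\subset\ovl{Q_{i,h}}$ and $\pi u|_K=\tilde u_{i}|_K$. In particular $\pi u$ carries no discontinuity \emph{inside} an element; the jump across $\Gamma_h^\ast$ occurs only between neighbouring elements. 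This is exactly what makes the classical theory of \cite{Ciarlet2002} applicable one element at a time.

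The key step is to show that on every $K\subset\ovl{Q_{i,h}}$ one in fact has $I_h u=I_h\tilde u_{i}$, so that $\pi u-I_h u=\tilde u_{i}-I_h\tilde u_{i}$ on $K$. Because $I_h$ depends only on nodal values, it suffices to verify $u=\tilde u_{i}$ at each of the $d+2$ vertices of $K$. For a non-interface element this is immediate, as every vertex lies in $Q_i$ where $\tilde u_i=u_i=u$. For an interface element (scenario 3), the $d+1$ vertices lying on $\Gamma^\ast$ are handled by the hypothesis $\left[u\right]_\ast=0$: there $\tilde u_1=u_1=u_2=\tilde u_2=u$, so the nodal value is unambiguous and agrees with $\tilde u_i$. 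For the single remaining off-interface vertex $P_0$ I would combine shape-regularity with the resolution estimate $\operatorname{dist}\left(\Gamma_h^\ast\cap K;\Gamma^\ast\cap K\right)\le Ch^2$: the distance from $P_0$ to the face of $K$ on $\Gamma_h^\ast$ is bounded below by $c\,h$, whereas $S_h\cap K$ sits within an $O(h^2)$ tube around that face, so for $h$ small $P_0\notin S_h$; hence $P_0$ lies on the same side of $\Gamma^\ast$ as of $\Gamma_h^\ast$, i.e.\ $P_0\in Q_{i}$, and again $u(P_0)=u_i(P_0)=\tilde u_i(P_0)$. This per-vertex matching delivers $I_h u=I_h\tilde u_i$ on $K$.

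With this reduction in hand, the classical linear interpolation estimate on the shape-regular simplex $K$ gives
$$
\norm{\tilde u_i-I_h\tilde u_i}_{\Ls^2(K)}+h\norm{\Ds\left(\tilde u_i-I_h\tilde u_i\right)}_{\LLs^2(K)}\le Ch^2\abs{\tilde u_i}_{\Hs^2(K)}.
$$
I would then square, sum over all $K$, split the sum according to $i(K)\in\{1,2\}$, and bound each partial sum by the global seminorm of the corresponding extension, using that the elements with a fixed $i(K)$ are disjoint: $\sum_{i(K)=i}\abs{\tilde u_i}_{\Hs^2(K)}^2\le\abs{\tilde u_i}_{\Hs^2\left(Q_T\right)}^2\le\norm{\tilde u_i}_{\Hs^2\left(Q_T\right)}^2$. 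The Stein bound \eqref{eq: extension operator} yields $\norm{\tilde u_i}_{\Hs^2\left(Q_T\right)}\le C\norm{u_i}_{\Hs^2\left(Q_i\right)}$, and adding the two contributions produces $C\norm{u}_{\Hs^2\left(Q_1\cup Q_2\right)}$ on the right, which is \eqref{eq: interpolation} after taking square roots.

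The main obstacle — and the only place where the hypotheses genuinely enter — is the nodal identity $I_h u=I_h\tilde u_{i(K)}$ on interface elements. Both the continuity hypothesis $\left[u\right]_\ast=0$ (for the vertices on $\Gamma^\ast$) and the quantitative $O(h^2)$ interface-resolution estimate (to guarantee the off-interface vertex has not strayed into the wrong true subdomain) are indispensable: without either, $u$ and $\pi u$ could disagree at a node, and the resulting $O(1)$ nodal mismatch would destroy the optimal $h^2$ rate. Everything downstream is the standard Bramble--Hilbert/Ciarlet machinery together with the already-established Stein extension bound.
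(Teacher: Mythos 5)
Your proof is correct and takes essentially the same route as the paper: the paper's own (two-sentence) proof rests precisely on the nodal identity $I_h u = I_h \tilde{u}_i$ at all Lagrange nodes in each $Q_{i,h}$, followed by classical interpolation theory for $\tilde{u}_i\in \Hs^2\left(Q_T\right)$ element by element and the Stein extension bound \eqref{eq: extension operator}. Your per-vertex verification of that identity --- using $\left[u\right]_\ast=0$ at nodes on $\Gamma^\ast$ and the $O(h^2)$ interface-resolution plus shape-regularity argument for the off-interface vertex (valid for $h$ sufficiently small) --- is a careful justification of exactly the step the paper asserts without proof.
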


\begin{proof}
    The proof of \eqref{eq: interpolation} is standard since in each subdomain $Q_{i,h}$ we have $I_h u = I_h \tilde{u}_i$ in all Lagrange nodes of the mesh $\mathcal{T}_h$. We apply the classical interpolation theory for $\tilde{u}_i\in \Hs^2\left(Q_T\right)$ and \eqref{eq: extension operator} to obtain the result.
\end{proof}

\begin{remark}
    In Lemma \ref{lem: interpolation error}, we consider $u\in \Hs^2\left(Q_1 \cup Q_2\right)$ such that $\left[u\right]_{\ast}=0$ to be able to use the interpolant $I_h: \Cs\left(\overline{Q_T}\right) \rightarrow \Vs_h$. The condition $[u]_\ast=0$ was also used in Lemma 2.4 of \cite{Deka2010} and Lemma 2.3 of \cite{DA2013}, where the authors arrived at an optimal approximation of $I_h$.
\end{remark}

\subsection{A priori error estimates}

In Section \ref{sec:var_form}, we proved the well-posedness of the problem \eqref{eq:vf} in the space $\Xs_0$, which only requires $\partial_t u\in \Ys^\prime$. However, one needs additional regularities of the solution $u$ to study the discretization error. More particularly, we assume that the solution $u\in \Xs_0$ of the problem \eqref{eq:vf} satisfies $u\in \Hs^{0,1}\left(Q_T\right)\cap\Hs^s\left(Q_1\cup Q_2\right)$, with $s > \frac{d+3}{2}$. Extending the ideas in \cite{FJR2023}, we study the error $u-u_h$ by choosing the discrete norm $\vertiii{\pi u-u_h}_\ast$. This choice is appropriate for employing numerical quadratures and allows us to better understand the mismatch's effect in $S_h$. 

\begin{theorem}
    \label{theo: error estimate H1.1}
    Let $u\in \Xs_0$ and $u_h\in \Vs_h$ be the solutions of the variational problems \eqref{eq:vf} and \eqref{eq: discrete vf}, respectively. If $u\in \Hs^{0,1}\left(Q_T\right)\cap\Hs^s\left(Q_1\cup Q_2\right)$ for given $s > \frac{d+3}{2}$, then there holds the following estimate
    \begin{equation}
        \label{eq: error estimate H1.1}
        \vertiii{\pi u-u_h}_\ast \le C h \norm{u}_{\Hs^s\left(Q_1\cup Q_2\right)}.
    \end{equation}
\end{theorem}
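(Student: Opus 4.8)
The plan is to follow the standard nonconforming (second Strang) strategy built on the discrete inf-sup condition $\eqref{eq: discrete inf-sup}$, splitting the error through the interpolant $I_h u \in \Vs_h$. First I would write
$$
\vertiii{\pi u - u_h}_\ast \le \vertiii{\pi u - I_h u}_\ast + \vertiii{I_h u - u_h}_\ast,
$$
and dispose of the purely interpolatory term using \cref{lem: interpolation error} together with $\eqref{eq: discrete and continuous norm comparision}$: indeed $\vertiii{\pi u - I_h u}^2 \le C\norm{\Ds(\pi u - I_h u)}^2_{\LLs^2\left(Q_{1,h}\cup Q_{2,h}\right)}$ and $\vertiii{z_h(\pi u - I_h u)} \le C\norm{\pa_t(\pi u - I_h u)}_{\Ls^2\left(Q_{1,h}\cup Q_{2,h}\right)}$, both controlled by $Ch\norm{u}_{\Hs^2\left(Q_1\cup Q_2\right)} \le Ch\norm{u}_{\Hs^s\left(Q_1\cup Q_2\right)}$ since $s > 2$.

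For the discrete component $I_h u - u_h \in \Vs_h$ I would invoke $\eqref{eq: discrete inf-sup}$ to reduce matters to bounding $a_h(I_h u - u_h, \vphi_h)$ for arbitrary $\vphi_h \in \Vs_h$. The crucial observation is the Galerkin-type identity $a_h(u_h, \vphi_h) = \inprod{f,\vphi_h}_{\Ys^\prime \times \Ys} = a(u, \vphi_h)$, valid because $\Vs_h\subset\Ys$; hence $a_h(I_h u - u_h, \vphi_h) = a_h(I_h u, \vphi_h) - a(u,\vphi_h)$. Using $u \in \Hs^{0,1}\left(Q_T\right)$ to write $\inprod{\pa_t u,\vphi_h}_{\Ys^\prime \times \Ys} = \int_{Q_T}\pa_t u\,\vphi_h$, this difference expands into a time term $\int \pa_t(I_h u - u)\vphi_h$, an advection term $\int(\vb\cdot\nabla(I_h u - u))\vphi_h$, and a diffusion term $\int(\kappa_h\nabla I_h u - \kappa\nabla u)\cdot\nabla\vphi_h$. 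Into each I insert $\pi u$. The pieces measuring $I_h u - \pi u$ are controlled globally by \cref{lem: interpolation error} and $\eqref{eq: L2 and Y norm comparision}$, while the pieces measuring $\pi u - u$ are supported in the mismatch region $S_h$ and handled by $\eqref{eq: discrepancy difference in u}$ together with $\norm{\vphi_h}_{\Ls^2\left(S_h\right)}\le\norm{\vphi_h}_{\Ls^2\left(Q_T\right)}\le C\vertiii{\vphi_h}$.

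The hard part will be the remaining diffusion contribution $\int_{S_h}(\kappa_h\nabla\pi u - \kappa\nabla u)\cdot\nabla\vphi_h$. On $S_h^1$ this integrand equals $\kappa_1\nabla\tilde u_1 - \kappa_2\nabla\tilde u_2$ (and symmetrically on $S_h^2$), which does \emph{not} vanish as $S_h$ shrinks, since the coefficient jump prevents the cancellation one would otherwise obtain from $\pi u - u$ alone; the discrepancy estimate $\eqref{eq: discrepancy difference in u}$ therefore no longer applies here. This is exactly where the hypothesis $s > \frac{d+3}{2}$ is needed: it yields, through the Sobolev embedding $\Hs^s\left(Q_i\right)\hookrightarrow\Cs^1\left(\ovl{Q_i}\right)$ in $\R^{d+1}$ and the Stein bound $\eqref{eq: extension operator}$, the uniform control $\norm{\kappa_h\nabla\pi u - \kappa\nabla u}_{\LLs^\infty\left(S_h\right)} \le C\norm{u}_{\Hs^s\left(Q_1\cup Q_2\right)}$. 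Combined with the volume bound $\abs{S_h}\le Ch^2$ from $\eqref{eq: discrepancy region}$ and $\norm{\nabla\vphi_h}_{\Ls^2\left(S_h\right)}\le C\vertiii{\vphi_h}$, this gives $\int_{S_h}(\kappa_h\nabla\pi u - \kappa\nabla u)\cdot\nabla\vphi_h \le Ch\norm{u}_{\Hs^s\left(Q_1\cup Q_2\right)}\vertiii{\vphi_h}$. Collecting all contributions yields $a_h(I_h u - u_h,\vphi_h)\le Ch\norm{u}_{\Hs^s\left(Q_1\cup Q_2\right)}\vertiii{\vphi_h}$, and feeding this into $\eqref{eq: discrete inf-sup}$ and the triangle inequality above completes the estimate $\eqref{eq: error estimate H1.1}$.
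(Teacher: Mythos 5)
Your proposal is correct and follows essentially the same route as the paper's own proof: the same splitting through $I_h u$, the same use of the discrete inf-sup condition \eqref{eq: discrete inf-sup} with the Galerkin identity $a_h(u_h,\vphi_h)=a(u,\vphi_h)$, and the same key step of controlling the diffusion mismatch on $S_h$ via the Stein extension, the Sobolev embedding afforded by $s>\frac{d+3}{2}$, and the volume bound $\abs{S_h}\le Ch^2$. The only cosmetic difference is that the paper organizes the consistency error as $I_1-I_2$ and bounds $\norm{\nabla \pi u}_{\LLs^2(S_h)}$ and $\norm{\nabla u}_{\LLs^2(S_h)}$ separately in the sup norm, whereas you bound the combined integrand $\kappa_h\nabla\pi u-\kappa\nabla u$ in $\LLs^\infty(S_h)$ — the same computation.
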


\begin{proof}
    First of all, we note that $u\in \Xs_0\cap \Hs^{0,1}\left(Q_T\right) \sst \Hs^{1}\left(Q_T\right)$, which implies that $[u]_\ast = 0$. In addition, if $u\in \Hs^s\left(Q_1\cup Q_2\right)$ with $s > \frac{d+3}{2}$, then we are able to employ the interpolant $I_h$. We start with the triangle inequality
    \begin{equation}
        \label{eq: error estimate H1 1}
        \vertiii{\pi u-u_h}_\ast \le \vertiii{\pi u- I_h u}_\ast + \vertiii{I_h u - u_h}_\ast.
    \end{equation}
    In what follows, we denote $w := \pi u- I_h u$ for convenience. By invoking \eqref{eq: discrete norms 2}, we have
    $$
    \vertiii{w}^2_\ast = \vertiii{w}^2 + \vertiii{z_h\left(w\right)}^2\le \max\left\{\kappa_{1}, \kappa_{2}\right\}\norm{\nabla w}^2_{\LLs^2\left(Q_{1,h}\cup Q_{2,h}\right)} + \vertiii{z_h\left(w\right)}^2.
    $$
    The first term is already bounded in \eqref{eq: interpolation}. For the second term, we employ \eqref{eq: discrete and continuous norm comparision} and \eqref{eq: interpolation} to obtain
    $$
    \vertiii{z_h\left(w\right)} \le C \norm{\Ds w}_{\LLs^{2}\left(Q_{1,h}\cup Q_{2,h}\right)}\le Ch \norm{u}_{\Hs^2\left(Q_1\cup Q_2\right)}.
    $$
    Thus
    \begin{equation}
        \label{eq: error estimate H1 3}
        \vertiii{w}_\ast \le Ch \norm{u}_{\Hs^2\left(Q_1\cup Q_2\right)}.
    \end{equation}
    Next, we estimate the second term on the right-hand side of \eqref{eq: error estimate H1 1} by using \eqref{eq: discrete inf-sup}. It follows that
    $$
    C\vertiii{I_h u - u_h}_\ast \le \sup_{\vphi_h \in \Vs_h\setminus \{0\}} \dfrac{a_h\left(I_h u - u_h, \vphi_h\right)}{\vertiii{\vphi_h}}= \sup_{\vphi_h \in \Vs_h\setminus \{0\}} \dfrac{a_h\left(I_h u, \vphi_h\right) - a\left(u, \vphi_h\right)}{\vertiii{\vphi_h}}=\sup_{\vphi_h \in \Vs_h\setminus \{0\}} \dfrac{I_1 - I_2}{\vertiii{\vphi_h}},
    $$
    where
    $$
        \begin{aligned}
            I_1&:=\sum_{i=1}^2\int\limits_{Q_{i,h}} \partial_t \, \tilde{u}_i \, \vphi_h + \vb\cdot \nabla \tilde{u}_i \, \vphi_h +  \kappa_i \nabla \tilde{u}_i \cdot \nabla \vphi_h \dx \dt - a\left(u, \vphi_h\right),\\
            I_2&:=\sum_{i=1}^2\int\limits_{Q_{i,h}} \partial_t \, \tilde{u}_i \, \vphi_h + \vb\cdot \nabla \tilde{u}_i \, \vphi_h +  \kappa_i \nabla \tilde{u}_i \cdot \nabla \vphi_h \dx \dt - a_h\left(I_h u, \vphi_h\right).
        \end{aligned}
    $$
    We first deal with $I_1$. Since $u\in \Hs^{0,1}\left(Q_T\right)$ and the associated integrand vanishes everywhere besides on $S_h$, we have 
    \begin{equation}
        \label{eq: error estimate H1 4}
        \begin{aligned}
            I_1 &= \int\limits_{S_h} \partial_t\left(\pi u - u\right) \vphi_h +\vb\cdot\nabla\left(\pi u - u\right) \vphi_h + \left(\kappa_h \nabla \left(\pi u\right) - \kappa \nabla u\right) \cdot \nabla \vphi_h \dx \dt\\
            &\le C {\norm{\Ds\left(\pi u - u\right)}_{\LLs^2\left(S_h\right)}}\norm{\vphi_h}_{\Ls^2\left(S_h\right)} + C\left(\norm{\nabla \pi u}_{\LLs^2\left(S_h\right)} + \norm{\nabla u}_{\LLs^2\left(S_h\right)}\right)\vertiii{\vphi_h}.
        \end{aligned}
    \end{equation}
    To treat the first part on the right-hand side, note that the term $\norm{\Ds\left(\pi u - u\right)}_{\LLs^2\left(S_h\right)}$ is bounded in \eqref{eq: discrepancy difference in u}, and from \eqref{eq: L2 and Y norm comparision}, we have
    $$
    \norm{\vphi_h}_{\Ls^2\left(S_h\right)}\le C\vertiii{\vphi_h} \qqqq \forall \vphi_h \in \Vs_h.
    $$
    Now, we handle the two norms $\norm{\nabla \pi u}_{\LLs^2\left(S_h\right)}$ and $\norm{\nabla u}_{\LLs^2\left(S_h\right)}$ by invoking the technique in Lemma 2.1 of \cite{CZ1998}. Take $\norm{\nabla \pi u}_{\LLs^2\left(S_h\right)}$ for instance. Please note that given $s > \frac{d+3}{2}$, the Sobolev embedding theorem says that $\HHs^{s-1}\left(Q_T\right) \hookrightarrow \hookrightarrow \CCs\left(\ovl{Q_T}\right)$. Therefore, we can use \eqref{eq: discrepancy region} and \eqref{eq: extension operator} to get
    $$
    \begin{aligned}
        \norm{\nabla \pi u}_{\LLs^2\left(S_h\right)} \le \abs{S_h}^{\frac{1}{2}} \norm{\nabla \pi u}_{\CCs\left(\ovl{S_h}\right)} &\le C h \max{\left\{\norm{\nabla \tilde{u}_1}_{\CCs\left(\ovl{Q_T}\right)}, \norm{\nabla \tilde{u}_2}_{\CCs\left(\ovl{Q_T}\right)}\right\}} \\
        & \le C h \max{\left\{\norm{\nabla \tilde{u}_1}_{\HHs^{s-1}\left(Q_T\right)}, \norm{\nabla \tilde{u}_2}_{\HHs^{s-1}\left(Q_T\right)}\right\}} \le C h \norm{u}_{\Hs^s\left(Q_1\cup Q_2\right)},
    \end{aligned}
    $$
    In this way, we can bound $I_1$ by
    $$
    I_1 \le Ch\norm{u}_{\Hs^s\left(Q_1\cup Q_2\right)}\vertiii{\vphi_h}.
    $$
    For estimating $I_2$, we utilize \eqref{eq: L2 and Y norm comparision} to arrive at
    $$
    \begin{aligned}
        I_2 &=\sum_{i=1}^2\int\limits_{Q_{i,h}} \partial_t \tilde{u}_i \, \vphi_h + \vb\cdot \nabla \tilde{u}_i \, \vphi_h +  \kappa_i \nabla \tilde{u}_i \cdot \nabla \vphi_h \dx \dt - a_h\left(I_h u, \vphi_h\right)\\
        &=\sum_{i=1}^2\int\limits_{Q_{i,h}} \partial_t w \, \vphi_h + \vb\cdot \nabla w \, \vphi_h + \kappa_i \nabla w \cdot \nabla \vphi_h \dx \dt\\
        &\le C \left({\norm{\Ds w}_{\LLs^2\left(Q_{1,h}\cup Q_{2,h}\right)}}\norm{\vphi_h}_{\Ls^2\left(Q_T\right)} + \norm{\nabla w}_{\LLs^2\left(Q_{1,h}\cup Q_{2,h}\right)}\vertiii{\vphi_h} \right)\\
        &\le C \norm{\Ds w}_{\LLs^2\left(Q_{1,h}\cup Q_{2,h}\right)} \vertiii{\vphi_h},
    \end{aligned}
    $$
    which, by means of \eqref{eq: interpolation}, implies that
    $$
    I_2 \le Ch \norm{u}_{\Hs^2\left(Q_1\cup Q_2\right)}\vertiii{\vphi_h}.
    $$
    Finally, we end up with
    \begin{equation}
        \label{eq: error estimate H1 8}
        \vertiii{I_h u - u_h}_\ast \le C h\norm{u}_{\Hs^s\left(Q_1\cup Q_2\right)}\vertiii{\vphi_h}.
    \end{equation}
    We combine \eqref{eq: error estimate H1 8} with \eqref{eq: error estimate H1 3} and \eqref{eq: error estimate H1 1} to obtain the result. The proof is complete.
\end{proof}

\begin{remark}
    From a computational perspective, higher-order finite elements can be freely used instead of being limited to space-time $P_1$ elements. However, the error achieved will not be better than the one using $P_1$ elements. The reason is that a priori error estimates depend not only on the type of finite elements used but also on the discretization error of the interface. Since $\abs{S_h} \le C h^{2}$ (cf. Lemma~\ref{lem: geometry appximation}), the orders of a priori error estimates will not improve even when higher-order finite elements are employed.
    
    To achieve better results, we need to invoke a higher-order interface-fitted approximation method, generalizing our strategy. This topic has been studied by Li et al. for elliptic interface problems \cite{LMWZ2010}. For our problem, we believe their method is well-suited to be combined with our method. However, to the best of our knowledge, extending their result to the problem \eqref{eq: IBVP} remains an open question and will be a potential direction for future research.
\end{remark}

In case of a weaker regularity condition $u \in \Hs^{0,1}\left(Q_T\right)\cap\Hs^2\left(Q_1\cup Q_2\right)$, we have the following a priori error estimate, which is now different between one and two spatial dimensions. 

\begin{theorem}
    \label{theo: error estimate H1}
    Let $u\in \Xs_0$ and $u_h\in \Vs_h$ be the solutions of the variational problems \eqref{eq:vf} and \eqref{eq: discrete vf}, respectively. Assume that $u\in \Hs^{0,1}\left(Q_T\right)\cap\Hs^2\left(Q_1\cup Q_2\right)$. Then, there exists $h_0 > 0$ such that for all $h < h_0$, the following estimate holds
    \begin{equation}
        \label{eq: error estimate H1}
        \vertiii{\pi u-u_h}_\ast \le C 
        \begin{Bmatrix}
            h\sqrt{\abs{\log h}}\\
            h^{\frac{2}{3}}
        \end{Bmatrix}\norm{u}_{\Hs^2\left(Q_1\cup Q_2\right)}\qqq \text{for }
        \begin{Bmatrix}
            d=1\\
            d=2
        \end{Bmatrix}.
    \end{equation}
\end{theorem}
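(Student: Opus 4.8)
The plan is to follow the proof of Theorem~\ref{theo: error estimate H1.1} almost verbatim, isolating the single place where the reduced regularity forces a degradation of the rate. As in that proof, I would first note that $u\in\Xs_0\cap\Hs^{0,1}(Q_T)\sst\Hs^1(Q_T)$ gives $[u]_\ast=0$, and that for $d\in\{1,2\}$ the Sobolev embedding $\Hs^2(Q_i)\emb\Cs(\ovl{Q_i})$ still holds (since $2>\tfrac{d+1}{2}$), so $u\in\Cs(\ovl{Q_T})$ and the interpolant $I_h$ remains well defined. I would then split $\vertiii{\pi u-u_h}_\ast\le\vertiii{\pi u-I_h u}_\ast+\vertiii{I_h u-u_h}_\ast$ exactly as in \eqref{eq: error estimate H1 1}, bound the first summand by $Ch\norm{u}_{\Hs^2\left(Q_1\cup Q_2\right)}$ through \eqref{eq: interpolation} and \eqref{eq: discrete and continuous norm comparision} (the argument leading to \eqref{eq: error estimate H1 3} uses only $\Hs^2$ regularity and is unchanged), and control the second summand via the discrete inf-sup condition \eqref{eq: discrete inf-sup}, reducing everything to bounding $I_1$ and $I_2$. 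The term $I_2$ is estimated precisely as before and yields $I_2\le Ch\norm{u}_{\Hs^2\left(Q_1\cup Q_2\right)}\vertiii{\vphi_h}$, since it only invokes \eqref{eq: interpolation} and \eqref{eq: L2 and Y norm comparision}.

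The entire loss of optimality is localized in $I_1$, specifically in the two factors $\norm{\nabla\pi u}_{\LLs^2\left(S_h\right)}$ and $\norm{\nabla u}_{\LLs^2\left(S_h\right)}$ appearing in \eqref{eq: error estimate H1 4}. In Theorem~\ref{theo: error estimate H1.1} these were handled by passing to $\CCs(\ovl{Q_T})$, which required $s>\tfrac{d+3}{2}$; under the weaker hypothesis $u\in\Hs^2\left(Q_1\cup Q_2\right)$ the spatial gradients $\nabla\tilde u_i$ lie only in $\HHs^1(Q_T)$ and are no longer uniformly bounded. My plan is instead to interpolate via H\"older between $\LLs^2\left(S_h\right)$ and a higher Lebesgue exponent: for any $p\ge 2$,
\begin{equation*}
    \norm{\nabla\pi u}_{\LLs^2\left(S_h\right)}\le\abs{S_h}^{\frac12-\frac1p}\norm{\nabla\pi u}_{\LLs^p\left(S_h\right)}\le Ch^{1-\frac2p}\norm{\nabla\pi u}_{\LLs^p\left(Q_T\right)},
\end{equation*}
using $\abs{S_h}\le Ch^2$ from \eqref{eq: discrepancy region}, and then to feed in the Sobolev embedding of $\HHs^1(Q_T)$ into $\LLs^p(Q_T)$ together with the stability \eqref{eq: extension operator} to replace $\norm{\nabla\pi u}_{\LLs^p\left(Q_T\right)}$ by $C_p\norm{u}_{\Hs^2\left(Q_1\cup Q_2\right)}$, where $C_p$ is the embedding constant. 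The factor $\norm{\nabla u}_{\LLs^2\left(S_h\right)}$ is treated identically, restricting $u$ to the relevant subdomain (recall $S_h^1\sst Q_2$ and $S_h^2\sst Q_1$) so that its gradient is again an $\HHs^1$ function.

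The two dimensions then diverge according to the behaviour of $C_p$. For $d=1$ the space-time domain is two-dimensional, $\HHs^1(Q_T)\emb\LLs^p(Q_T)$ holds for every finite $p$, and the sharp (borderline) embedding constant grows like $C_p\le C\sqrt{p}$; balancing $h^{1-2/p}\sqrt{p}=h\,e^{(2/p)\abs{\log h}}\sqrt{p}$ over $p$ gives the optimal choice $p\sim\abs{\log h}$ and the bound $Ch\sqrt{\abs{\log h}}\,\norm{u}_{\Hs^2\left(Q_1\cup Q_2\right)}$, valid once $h<h_0$ so that this $p$ exceeds $2$. For $d=2$ the space-time domain is three-dimensional and the embedding is only available up to the critical exponent $p=6$; taking $p=6$ yields $Ch^{2/3}\norm{u}_{\Hs^2\left(Q_1\cup Q_2\right)}$, with no room to recover a logarithm. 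Since all other contributions are $O(h)$ and hence dominated by these rates for small $h$, collecting the estimates as in \eqref{eq: error estimate H1 8} produces the claimed bound \eqref{eq: error estimate H1}.

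I expect the main obstacle to be the one-dimensional case: extracting the precise power $h\sqrt{\abs{\log h}}$ requires the borderline (Trudinger-type) Sobolev inequality with its explicit $\sqrt{p}$ growth of the embedding constant, followed by a clean optimization in $p$ — a standard but delicate point, in contrast to the $d=2$ case, where the fixed critical exponent immediately dictates the sub-optimal $h^{2/3}$ rate.
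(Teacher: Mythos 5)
Your proposal is correct and follows essentially the same route as the paper's proof: reduce to the argument of Theorem~\ref{theo: error estimate H1.1}, isolate the loss in the terms $\norm{\nabla \pi u}_{\LLs^2\left(S_h\right)}$ and $\norm{\nabla u}_{\LLs^2\left(S_h\right)}$, apply H\"older with $\abs{S_h}\le Ch^2$, invoke the embedding $\HHs^1\left(Q_T\right)\hookrightarrow\LLs^p\left(Q_T\right)$ with the $p^{1/2}$-growth of the constant for $d=1$ (the paper cites \cite[Lemma~2.1]{RW1994}, i.e.\ \eqref{eq:Sobolev_inequality}) versus the fixed critical exponent $p=6$ for $d=2$, and optimize in $p\sim\abs{\log h}$ exactly as the paper does (it takes $p=-4\log h$).
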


\begin{proof}
    The proof is analogous to the previous theorem. The only differences are the estimates of $\norm{\nabla \pi u}_{\LLs^2\left(S_h\right)}$ and $\norm{\nabla u}_{\LLs^2\left(S_h\right)}$ in \eqref{eq: error estimate H1 4}. We consider the former term since the latter can be treated similarly. We recall that the embedding $\HHs^1\left(Q_T\right)\hookrightarrow \LLs^p\left(Q_T\right)$ holds true for all $p\in [1,\infty)$ when $d=1$ and for $p\in [1,6]$ when $d=2$. In the following, we consider $p \ge 2$ if $d=1$ and set $p = 6$ in the other case while using the same notation. One can get from \eqref{eq: discrepancy region} that
    $$
    \norm{\nabla \pi u}_{\LLs^2\left(S_h\right)} \le \abs{S_h}^{\frac{1}{2}-\frac{1}{p}} \norm{\nabla \pi u}_{\LLs^p\left(S_h\right)} \le C h^{1 - \frac{2}{p}} \norm{\nabla \pi u}_{\LLs^p\left(S_h\right)},
    $$
    which implies 
    \begin{equation}
        \label{eq: error estimate H1 6}
        \begin{aligned}
            \norm{\nabla \pi u}_{\LLs^2\left(S_h\right)}&\le C h^{1-\frac{2}{p}}\left(\norm{\nabla\tilde{u}_1}_{\LLs^p\left(Q_T\right)}^p + \norm{\nabla\tilde{u}_2}_{\LLs^p\left(Q_T\right)}^p\right)^{\frac{1}{p}}\\
            &\le C
            \begin{Bmatrix}
                h^{1-\frac{2}{p}}p^{\frac{1}{2}}\\
                h^{1-\frac{2}{6}}
            \end{Bmatrix}\left(\norm{\nabla\tilde{u}_1}_{\HHs^1\left(Q_T\right)}^p + \norm{\nabla\tilde{u}_2}_{\HHs^1\left(Q_T\right)}^p\right)^{\frac{1}{p}}\\
            &\le C
            \begin{Bmatrix}
                h^{1-\frac{2}{p}}p^{\frac{1}{2}}\\
                h^{1-\frac{2}{6}}
            \end{Bmatrix}\norm{u}_{\Hs^2\left(Q_1\cup Q_2\right)}\le C
            \begin{Bmatrix}
                h\sqrt{\abs{\log h}}\\
                h^{\frac{2}{3}}
            \end{Bmatrix}\norm{u}_{\Hs^2\left(Q_1\cup Q_2\right)}\qqq \text{for }
            \begin{Bmatrix}
                d=1\\
                d=2
            \end{Bmatrix}.
        \end{aligned}
    \end{equation}
    In this step, we employed from Lemma 2.1 in \cite{RW1994} the Sobolev inequality
    \begin{equation}
    \label{eq:Sobolev_inequality}
        \norm{\nabla \tilde{u}_i}_{\LLs^p\left(Q_T\right)}\le C p^{\frac{1}{2}}\norm{\nabla \tilde{u}_i}_{\HHs^1\left(Q_T\right)},
    \end{equation}
    which holds true for all $p \in \left[2, \infty\right)$ when $d=1$, with $C$ independent of $p$. Moreover, one can easily prove that there exists $h_0>0$ such that for all $h < h_0$, the function $f\left(p\right)=h^{-\frac{2}{p}}p^{\frac{1}{2}}$ achieves its global minimum at $p=-4\log h$. Therefore, we arrive at
    $$
    \begin{aligned}
        I_1 &= \int\limits_{S_h} \partial_t\left(\pi u - u\right) \vphi_h +\vb\cdot\nabla\left(\pi u - u\right) \vphi_h + \left(\kappa_h \nabla \left(\pi u\right) - \kappa \nabla u\right) \cdot \nabla \vphi_h \dx \dt\\
        & \le C
        \begin{Bmatrix}
            h\sqrt{\abs{\log h}}\\
            h^{\frac{2}{3}}
        \end{Bmatrix}\norm{u}_{\Hs^2\left(Q_1\cup Q_2\right)}\vertiii{\vphi_h}\qqq \text{for }
        \begin{Bmatrix}
            d=1\\
            d=2
        \end{Bmatrix}.
    \end{aligned}
    $$
    Hence, we have
    \begin{equation}
        \label{eq: error estimate H1 7}
        \vertiii{I_h u - u_h}_\ast \le C
        \begin{Bmatrix}
            h\sqrt{\abs{\log h}}\\
            h^{\frac{2}{3}}
        \end{Bmatrix}\norm{u}_{\Hs^2\left(Q_1\cup Q_2\right)}\qqq \text{for }
        \begin{Bmatrix}
            d=1\\
            d=2
        \end{Bmatrix}.
    \end{equation}
    The estimate \eqref{eq: error estimate H1} follows by substituting \eqref{eq: error estimate H1 3} and \eqref{eq: error estimate H1 7} into \eqref{eq: error estimate H1 1}.
\end{proof}

\begin{remark}
    Providing that $u\in \Hs^{0,1}\left(Q_T\right)\cap\Hs^s\left(Q_1\cup Q_2\right)$ with $s>\frac{d+3}{2}$, we obtained an optimal error estimate with respect to the discrete norm $\vertiii{\pi u -u_h}_\ast$ in Theorem~\ref{theo: error estimate H1.1}. However, in some scenarios when the solution is rougher, let us say $u\in \Hs^{0,1}\left(Q_T\right)\cap\Hs^2\left(Q_1\cup Q_2\right)$, the estimate \eqref{eq: error estimate H1.1} turns out to be not applicable. In this case, we arrived at a nearly optimal error estimate in one spatial dimension and a sub-optimal estimate in two spatial dimensions (Theorem~\ref{theo: error estimate H1}). The nearly optimal bound $\mathcal{O}\left(h\sqrt{\abs{\log h}}\right)$ for $d = 1$ mainly relies on the Sobolev inquality \eqref{eq:Sobolev_inequality}, which is unfortunately not available for $d = 2$ (in our case, 3D space-time domain $Q_T$). 
    
    There are better bounds in the literature \cite{LR2013, Guo2021}. However, these studies relied on the interface-unfitted mesh methods, which require more effort to compute interaction integrals over the moving interface. Instead, we discretize the space-time domain using an interface-fitted mesh and avoid invoking special quadrature rules on cut elements. Moreover, the optimal error estimate can be recovered when the solution is sufficiently smooth on each subdomain.
\end{remark}

From the numerical point of view, the error $\vertiii{\pi u - u_h}_\ast$ is appropriate for studying the discrete problem \eqref{eq: discrete vf} since $\pi u$ and $\kappa_h$ are discontinuous through the discrete interface instead of the continuous one. However, it is worth noting that we also arrive at the same convergence rate when using the error $\vertiii{u-u_h}_\ast$ or $\norm{u-u_h}_{\Ys}$. For instance, we can combine \eqref{eq: discrepancy difference in u}, \eqref{eq: error estimate H1.1}, and \eqref{eq: error estimate H1} to get the following result:

\begin{corollary}
    \label{coro: Y-norm error estimate}
    Let $u\in \Xs_0$ and $u_h\in \Vs_h$ be the solutions of the variational problems \eqref{eq:vf} and \eqref{eq: discrete vf}, respectively. Assume that $u\in \Hs^{0,1}\left(Q_T\right)\cap\Hs^s\left(Q_1\cup Q_2\right)$ for given $s>\frac{d+3}{2}$. Then, the following estimate holds 
    $$
    \vertiii{u-u_h}_\ast \le C h \norm{u}_{\Hs^s\left(Q_1\cup Q_2\right)}\qqqq \text{for } d=1\text{ or }2.
    $$
    If we only have $u\in \Hs^{0,1}\left(Q_T\right)\cap\Hs^2\left(Q_1\cup Q_2\right)$, then there exists $h_0 > 0$ such that for all $h < h_0$, it holds the following
    $$
    \vertiii{u-u_h}_\ast \le C 
        \begin{Bmatrix}
            h\sqrt{\abs{\log h}}\\
            h^{\frac{2}{3}}
        \end{Bmatrix}\norm{u}_{\Hs^2\left(Q_1\cup Q_2\right)}\qqq \text{for }
        \begin{Bmatrix}
            d=1\\
            d=2
        \end{Bmatrix}.
    $$
\end{corollary}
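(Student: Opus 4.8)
The plan is to pass from the already-established bounds on $\vertiii{\pi u - u_h}_\ast$ in Theorems \ref{theo: error estimate H1.1} and \ref{theo: error estimate H1} to the desired bounds on $\vertiii{u - u_h}_\ast$ by inserting $\pi u$ and invoking the triangle inequality
$$
\vertiii{u - u_h}_\ast \le \vertiii{u - \pi u}_\ast + \vertiii{\pi u - u_h}_\ast .
$$
The second summand is exactly what the two theorems control, so the whole task reduces to estimating the consistency term $\vertiii{u - \pi u}_\ast$ and then checking that its contribution never worsens the convergence rate supplied by the theorems.

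First I would bound $\vertiii{u - \pi u}_\ast$. Writing $w := u - \pi u$ and recalling the definition \eqref{eq: discrete norms 2}, we have $\vertiii{w}_\ast^2 = \vertiii{w}^2 + \vertiii{z_h(w)}^2$. The key structural fact, already observed just before \eqref{eq: discrepancy difference in u}, is that $w$ vanishes almost everywhere outside the discrepancy region $S_h$; since the seminorm $\vertiii{\cdot}$ integrates $\kappa_i\abs{\nabla\,\cdot\,}^2$ over $Q_{1,h}$ and $Q_{2,h}$ separately, this yields
$$
\vertiii{u - \pi u}^2 \le \max\{\kappa_1, \kappa_2\}\,\norm{\nabla(u - \pi u)}_{\LLs^2(S_h)}^2 .
$$
Likewise $\partial_t(u - \pi u)$ is supported in $S_h$ on each subdomain, so the comparison \eqref{eq: discrete and continuous norm comparision} gives
$$
\vertiii{z_h(u - \pi u)} \le C\,\norm{\partial_t(u - \pi u)}_{\Ls^2(S_h)} \le C\,\norm{\Ds(u - \pi u)}_{\LLs^2(S_h)} .
$$
Both contributions are then dominated by the single discrepancy estimate \eqref{eq: discrepancy difference in u}, producing $\vertiii{u - \pi u}_\ast \le C h\,\norm{u}_{\Hs^2(Q_1 \cup Q_2)}$. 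Note that \eqref{eq: discrepancy difference in u} demands only $u \in \Hs^2(Q_1 \cup Q_2)$, which is available in both regimes of the corollary since $s > \frac{d+3}{2} \ge 2$.

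Finally I would assemble the two cases. In the higher-regularity regime $s > \frac{d+3}{2}$, combining the bound above with Theorem \ref{theo: error estimate H1.1} and using $\norm{u}_{\Hs^2(Q_1\cup Q_2)} \le \norm{u}_{\Hs^s(Q_1\cup Q_2)}$ gives $\vertiii{u - u_h}_\ast \le C h\,\norm{u}_{\Hs^s(Q_1 \cup Q_2)}$ for $d = 1, 2$. In the rougher case $u \in \Hs^2(Q_1 \cup Q_2)$, Theorem \ref{theo: error estimate H1} furnishes the slower rates, and for $h < h_0$ one has $h \le h\sqrt{\abs{\log h}}$ when $d=1$ and $h \le h^{\frac{2}{3}}$ when $d=2$, so the consistency term $C h\,\norm{u}_{\Hs^2(Q_1\cup Q_2)}$ is absorbed into the dominant bound, giving precisely the stated estimate. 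I expect no genuine obstacle here; the only points requiring care are verifying that both $\nabla(u - \pi u)$ and $\partial_t(u - \pi u)$ are supported on $S_h$ (so that the one geometric estimate \eqref{eq: discrepancy difference in u} suffices for the whole $\vertiii{\cdot}_\ast$-norm) and confirming that the $\Hs^2$-regularity demanded by \eqref{eq: discrepancy difference in u} is genuinely present in the first regime.
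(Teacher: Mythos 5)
Your proposal is correct and takes essentially the same route as the paper: the paper's own (very terse) justification of the corollary is precisely your combination of the triangle inequality with the discrepancy bound \eqref{eq: discrepancy difference in u} and Theorems \ref{theo: error estimate H1.1} and \ref{theo: error estimate H1}. Your write-up merely fills in the details the paper leaves implicit --- that $u-\pi u$ is supported in $S_h$, that $\vertiii{z_h(u-\pi u)}$ is controlled via \eqref{eq: discrete and continuous norm comparision}, and that the $Ch\norm{u}_{\Hs^2\left(Q_1\cup Q_2\right)}$ consistency term is absorbed into the slower rates for $h<h_0$ --- and all of these steps are sound.
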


\begin{remark}
    The case $d=3$ contains several challenges, both theoretically and computationally. From a theoretical perspective, it requires the condition $s > \frac{3+3}{2} =3$ for the embedding $\HHs^{s-1}\left(Q_T\right) \hookrightarrow \hookrightarrow \CCs\left(\ovl{Q_T}\right)$, making the assumption $u\in \Hs^{0,1}\left(Q_T\right)\cap\Hs^s\left(Q_1\cup Q_2\right)$ becomes quite strict and difficult to achieve in practice. Therefore, we will be trying to use the $\Ls^2$-projection instead of the interpolant $I_h$ to relax the smoothness condition. However, this approach requires proving a result that differs from Lemma \ref{lem: interpolation error}, which will be investigated in the future.
    
    On the other hand, from a computational standpoint, discretizing a virtual space-time domain $Q_T \subset \mathbb{R}^4$ is a complex task with high computational cost. Please refer to \cite{NK2019, DAK+2021} and the references therein. The approach for constructing a 4-dimensional virtual mesh that fits the interface could be based on the boundary-conforming method by von Danwitz et al. \cite{DAK+2021}. This would be one of the ongoing directions of our research.
\end{remark}

\section{Numerical results}
\label{sec: results}

This section is devoted to presenting various numerical examples, both in one and two spatial dimensions, to show the performance of our method. The primary focus is to confirm the convergence rate of the proposed scheme and examine its robustness in dealing with different regularity solutions and various interface shapes. We perform all numerical experiments using the finite-element based solver FreeFEM++ \cite{Hecht2012}.

\subsection{2D space-time examples}

The first two numerical experiments involve the space-time domain $Q_T = (0, 1)^2$, where the interface evolves dynamically. The initial interface \(\Gamma(0)\) comprises two points \(x_1 = 0.4\) and \(x_2 = 0.6\). The movement of the interface is governed by a smooth velocity function \(\mathrm{v}\). We define the shift function \(\mathrm{w}(t)\) as the integral of the velocity function in time
\begin{equation*}
    \mathrm{w}(t) := \int\limits_0^t \mathrm{v}(\tau) \di \tau.
\end{equation*}
The two space-time subdomains of $Q_T$ are given by
\[  
    Q_1 = \brac{(x, t) \in Q_T\mid L_1(t) < x < L_2(t), \, 0 < t < 1}, \qqqq Q_2 = Q_T \setminus \ovl{Q_1},
\]
where $L_1(t) := x_1 + \mathrm{w}(t)$ and $L_2(t) := x_2 + \mathrm{w}(t)$ are level set functions. In this section, we consider two cases (see Figure~\ref{fig:domain_2D}):
\begin{itemize}
    \item[1.] (Constant velocity) \hspace{1.06cm} $\Gamma^*$ is planar: $\mathrm{v} = 0.1$, \qqqq \hspace{0.35cm} $\mathrm{w}(t) = 0.1 t$.
    \item[2.] (Non-constant velocity) \hspace{0.4cm} $\Gamma^*$ is curved: $\mathrm{v} = 0.1 \pi \cos(2 \pi t)$, \q $\mathrm{w}(t) = 0.05 \sin (2 \pi t)$.
\end{itemize}
For verifying the convergence rate of the proposed method, the norm $\norm{\cdot}_{\Ys}$ is employed
$$
\norm{u - u_h}_{\Ys} = \paren{\int\limits_0^T\int\limits_\Om \abs{\nabla\left(u - u_h\right)}^2\dx\dt}^{1/2}.
$$
As in Corollary \ref{coro: Y-norm error estimate}, the error estimate in this norm has the same order as those presented in Theorems~\ref{theo: error estimate H1.1} and \ref{theo: error estimate H1}. The domain \(Q_T\) is discretized in both cases using a quasi-uniform mesh with \(N\) equidistant intervals along each edge. We utilize a range of discretizations of $Q_T$ with varying values of $N$ to evaluate the errors with respect to the mesh size $h$.

\begin{figure}[!htp]
\centering
    \subfigure[Example 1.]{\begin{tikzpicture}
        \draw[-latex,line width=0.2mm] (0,0)--(5,0) node[below=1mm]{$x$}; 
        \draw[-latex,line width=0.2mm] (0,0)--(0,4.7) node[left]{$t$};
        \node at (0, 0) [below=0.8mm]{$0$};
        \node at (4, 0) [below=0.6mm]{$1$};
        \node at (0,4) [left]{$1$};
        \draw[line width = 0.4mm,black] (0, 0) -- (4, 0) -- (4, 4) -- (0, 4) -- (0, 0);
        \draw[domain=0:4,smooth,variable=\y,line width = 0.3mm,black] plot ({1.6 + 0.1*\y}, {\y});
        \draw[domain=0:4,smooth,variable=\y,line width = 0.3mm,black] plot ({2.4 + 0.1*\y}, {\y});
        \node at (2.2, 2) {$Q_1$};
        \node at (1.6, -0.3) {$L_1$};
        \node at (2.4, -0.3) {$L_2$};
        \node at (1, 1) {$Q_2$};
        \node at (4.5, 2) {$Q_T$};
        \node at (3, 3) {$\Gm^{\ast}$};
        \node at (1.6, 3) {$\Gm^{\ast}$};
    \end{tikzpicture}} \qqqqq
    \subfigure[Example 2.]{\begin{tikzpicture}
        \draw[-latex,line width=0.2mm] (0,0)--(5,0) node[below=1mm]{$x$}; 
        \draw[-latex,line width=0.2mm] (0,0)--(0,4.7) node[left]{$t$};
        \node at (0, 0) [below=0.8mm]{$0$};
        \node at (4, 0) [below=0.6mm]{$1$};
        \node at (0,4) [left]{$1$};
        \draw[line width = 0.4mm,black] (0, 0) -- (4, 0) -- (4, 4) -- (0, 4) -- (0, 0);
        \draw[domain=0:4,smooth,variable=\y,line width = 0.3mm,black] plot ({1.6 + 0.2*sin(deg(2*pi*\y/4))}, {\y});
        \draw[domain=0:4,smooth,variable=\y,line width = 0.3mm,black] plot ({2.4 + 0.2*sin(deg(2*pi*\y/4))}, {\y});
        \node at (2, 2) {$Q_1$};
        \node at (1.6, -0.3) {$L_1$};
        \node at (2.4, -0.3) {$L_2$};
        \node at (1, 1) {$Q_2$};
        \node at (4.5, 2) {$Q_T$};
        \node at (2.5, 3) {$\Gm^{\ast}$};
        \node at (1.15, 3) {$\Gm^{\ast}$};
    \end{tikzpicture}}
    \caption{The space-time domain $Q_T$ with an interface evolving at the velocity $\mathrm{v} = 0.1$ in the first example (left), and $\mathrm{v} = 0.1 \pi \cos(2\pi t)$ in the second example (right).}
    \label{fig:domain_2D}
\end{figure}
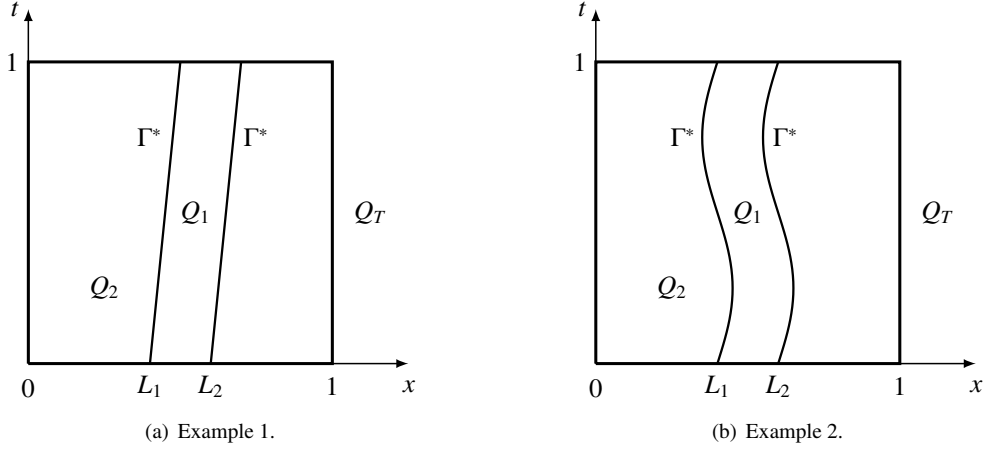

\noindent\textbf{Example 1.} In the first example, we consider the case of an interface moving with a constant velocity $\mathrm{v} = 0.1$. The level set functions are given by
\[
    L_1(t) = 0.4 + 0.1 t, \qqqq L_2(t) = 0.6 + 0.1 t.
\]
We choose the exact solution
\[
    u\left(x, t\right) = \left[\cos\paren{10\pi \left(x - L_1(t)\right)} - \cos\paren{10\pi L_1(t)}\right] \sin{\left(\dfrac{\pi t}{2}\right)},
\]
which is smooth on the whole space-time domain $Q_T$, i.e., $u \in \Hs^2\left(Q_T\right)$. It is worth noticing that this example is an exceptional case, where the solution $u$ belongs to $\Hs^2\left(Q_T\right)$ since it satisfies $\nabla u = 0$ on $\Gamma^\ast$. In general, such a regular solution does not exist owing to the jump of $\kappa$ across $\Gamma^\ast$. As suggested by classical theories, we expect that the proposed method provides an optimal convergence rate in this case. The material coefficients are set to fixed values $(\kappa_1, \kappa_2) =  (0.5,1)$, and the right-hand side of the equation \eqref{eq: IBVP} is determined accordingly. 

Table \ref{tab:example1} presents the errors and estimated convergence rates for various levels of mesh refinement. These results demonstrate a gradual improvement in errors as the mesh refinement levels increase. The results in Table \ref{tab:example1} are visualized in Figure~\ref{fig:error_2D}, offering an overall perspective on the achieved convergence rate. We see that the error convergence rate is $\mathcal{O}\left(h\right)$, which is optimal as expected. Therefore, in this scenario of a smooth solution, the performance of our method is confirmed.

\begin{table}[!htp]
\centering
\caption{The errors of the numerical method for various levels of mesh refinement in Example 1.} \label{tab:example1}
\begin{tabular}{ c c c l c } \toprule
   $N$ & dof & Mesh size $h$  & $\norm{u - u_h}_{\Ys}$ & Order  \\ \hline \hline
    10 & 3451 &  $3.410 \times 10^{-2}$             & $2.329 \times 10^0$   &  -     \\ 
    20 & 13072 & $1.897 \times 10^{-2}$         & $1.193 \times 10^{0} $ & 0.966  \\ 
    40 & 54455 &  $9.514 \times 10^{-3}$      & $5.582 \times 10^{-1}$   & 1.081   \\
    80 & 207616& $4.892 \times 10^{-3}$ & $2.948 \times 10^{-1}$ & 1.031\\
    160& 827676 & $2.525 \times 10^{-3}$ & $1.478 \times 10^{-1}$ & 1.058\\
    \bottomrule
\end{tabular}
\end{table}

\noindent\textbf{Example 2.} In the second example, a sinusoidal curved space-time interface is considered, which is determined by the velocity $\mathrm{v} = 0.1 \pi \cos{\left(2\pi t\right)}$. As a result, the level set functions have the forms
\[
    L_1(t) = 0.4 + 0.05 \sin(2\pi t), \qqqq L_2(t) = 0.6 + 0.05 \sin(2\pi t).
\] 
The same material coefficients are used, namely $\left(\kappa_1,\kappa_2\right) = \left(0.5, 1\right)$, and the exact solution is chosen as follows
\[
    u\left(x, t\right) =
    \begin{cases}
        \left[\sin\paren{20\pi \paren{x-L_1(t)} + \dfrac{\pi}{6}} + \sin \paren{10\pi L_1(t) - \dfrac{\pi}{6}}\right] \sin \paren{\dfrac{\pi t}{2}} \ & \text{for }\, \left(x, t\right) \in Q_1, \\[10pt]
        \left[\sin\paren{10\pi \paren{x-L_1(t)} + \dfrac{\pi}{6}} + \sin \paren{10\pi L_1(t) - \dfrac{\pi}{6}}\right] \sin \paren{\dfrac{\pi t}{2}} & \text{for }\, \left(x, t\right) \in Q_2.
    \end{cases}
\]
This solution is globally less regular than the one in the previous example. However, it is smooth on each space-time subdomain, i.e., $u \in \Hs^1\left(Q_T\right)\cap\Hs^3\left(Q_1\cup Q_2\right)$. Theorem~\ref{theo: error estimate H1.1} suggests that the proposed finite element method applied to this example exhibits an optimal convergence rate $\mathcal{O}\left(h\right)$. This fact is justified by the results in Table~\ref{tab:example2} and Figure~\ref{fig:error_2D}, where the estimated convergence rate is almost linear. 

While the solution has a discontinuous gradient across the moving interface, necessitating more effort, the difference between Examples 1 and 2 is insignificant when comparing the convergence rates obtained with the same mesh size. Hence, our method yields a similar level of discretization accuracy for both the case of an interface evolves with a constant velocity $\mathrm{v} = 0.1 $ and a time-dependent velocity $\mathrm{v} =0.1 \pi \cos{\left(2\pi t\right)}$. Figure~\ref{fig:solution_2D} describes the discrete solutions for both examples. To effectively simulate their different characteristics, three-dimensional visualizations are crucial. The continuity property of the solution in Example 1 is observed by the smooth curves across the interface. In contrast, the discontinuity of the solution gradient is highlighted by the fracture curves on the interface in Example 2.

\begin{table}[!htp] 
\centering
\caption{The errors of the numerical method for various levels of mesh refinement in Example 2.} \label{tab:example2}
\begin{tabular}{ c c c l c } \toprule
$N$& dof & Mesh size $h$   & $\norm{u - u_h}_{\Ys}$ & Order  \\ \hline\hline
   10 & 3370 &  $3.181 \times 10^{-2}$    & $4.561 \times 10^0$   &  -     \\ 
   20 & 13064&  $1.699 \times 10^{-2}$   & $2.338 \times 10^0 $ & 0.964  \\ 
   40 & 53074& $9.001 \times 10^{-3}$ & $1.154 \times 10^0$   & 1.038   \\
   80 & 212806& $4.302 \times 10^{-3}$ & $5.754 \times 10^{-1}$ & 1.149\\
   160& 847930& $2.459 \times 10^{-3}$  & $2.884 \times 10^{-1}$ & 1.189\\
    \bottomrule
\end{tabular}
\end{table}

\begin{figure}[!http]
\centering
    \subfigure
    {\includegraphics[scale =  .2 ]{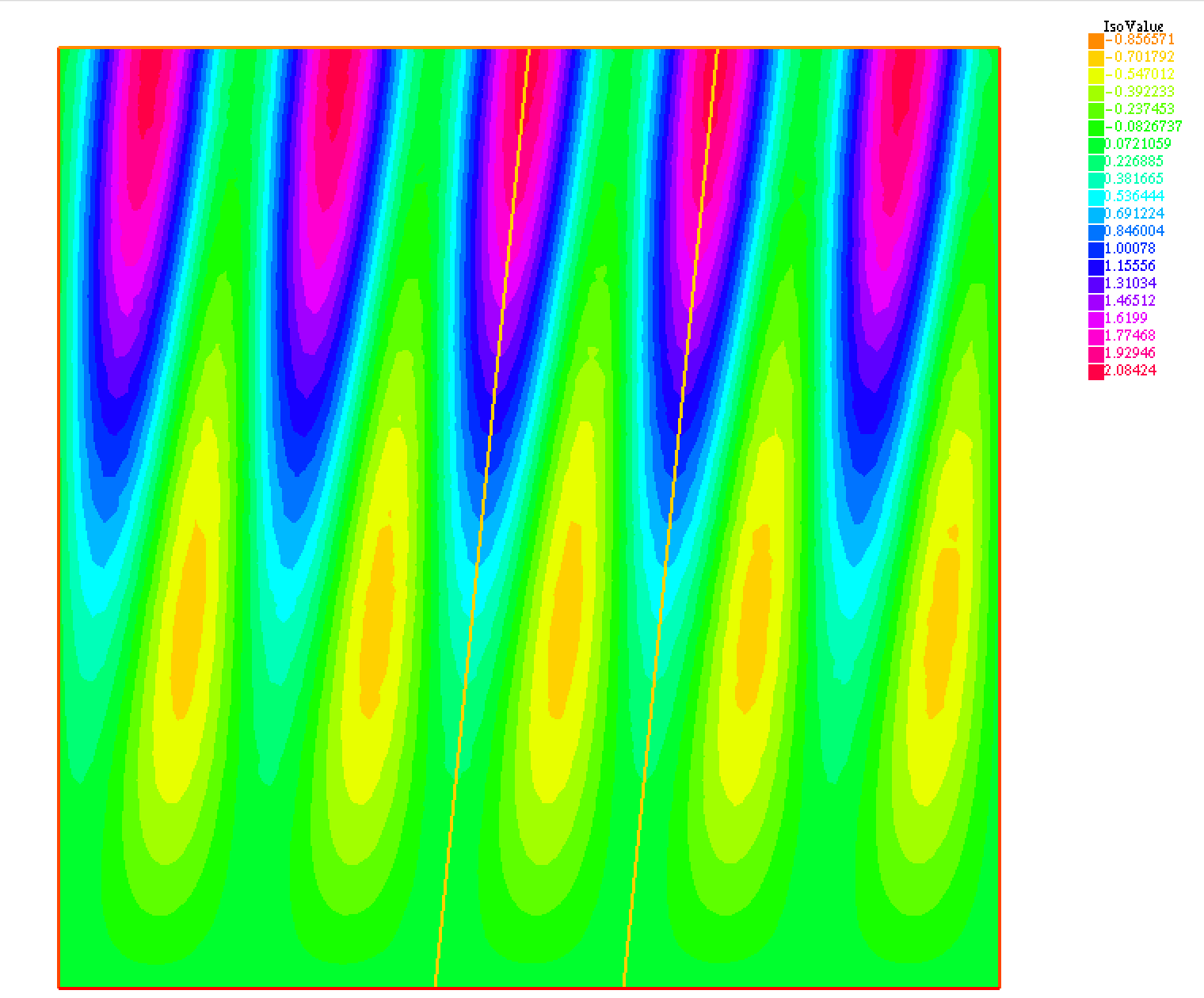}} \hspace{2.5cm}
    \subfigure{\includegraphics[scale = .2]{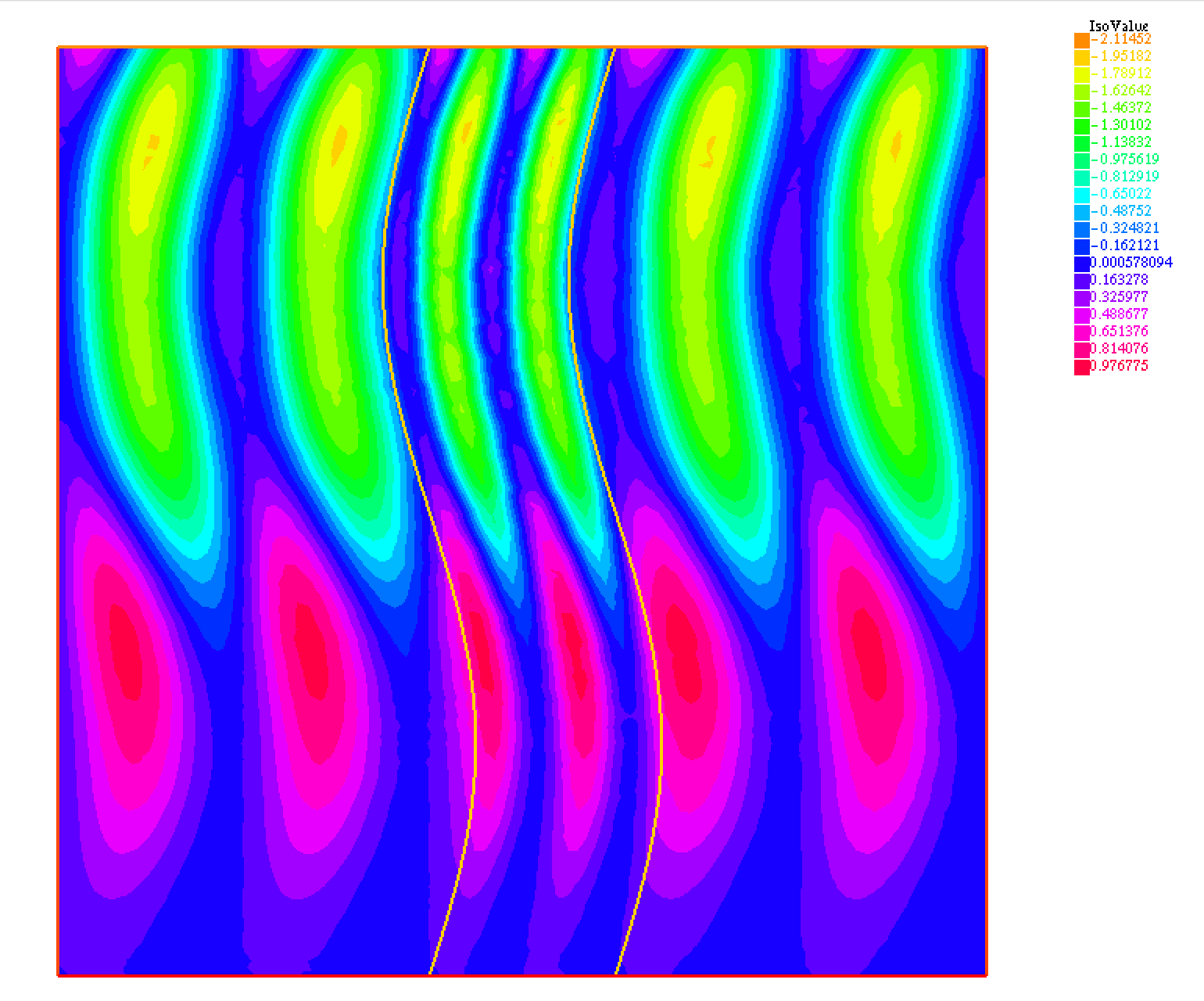}}\\
    \subfigure{\includegraphics[scale = .16]{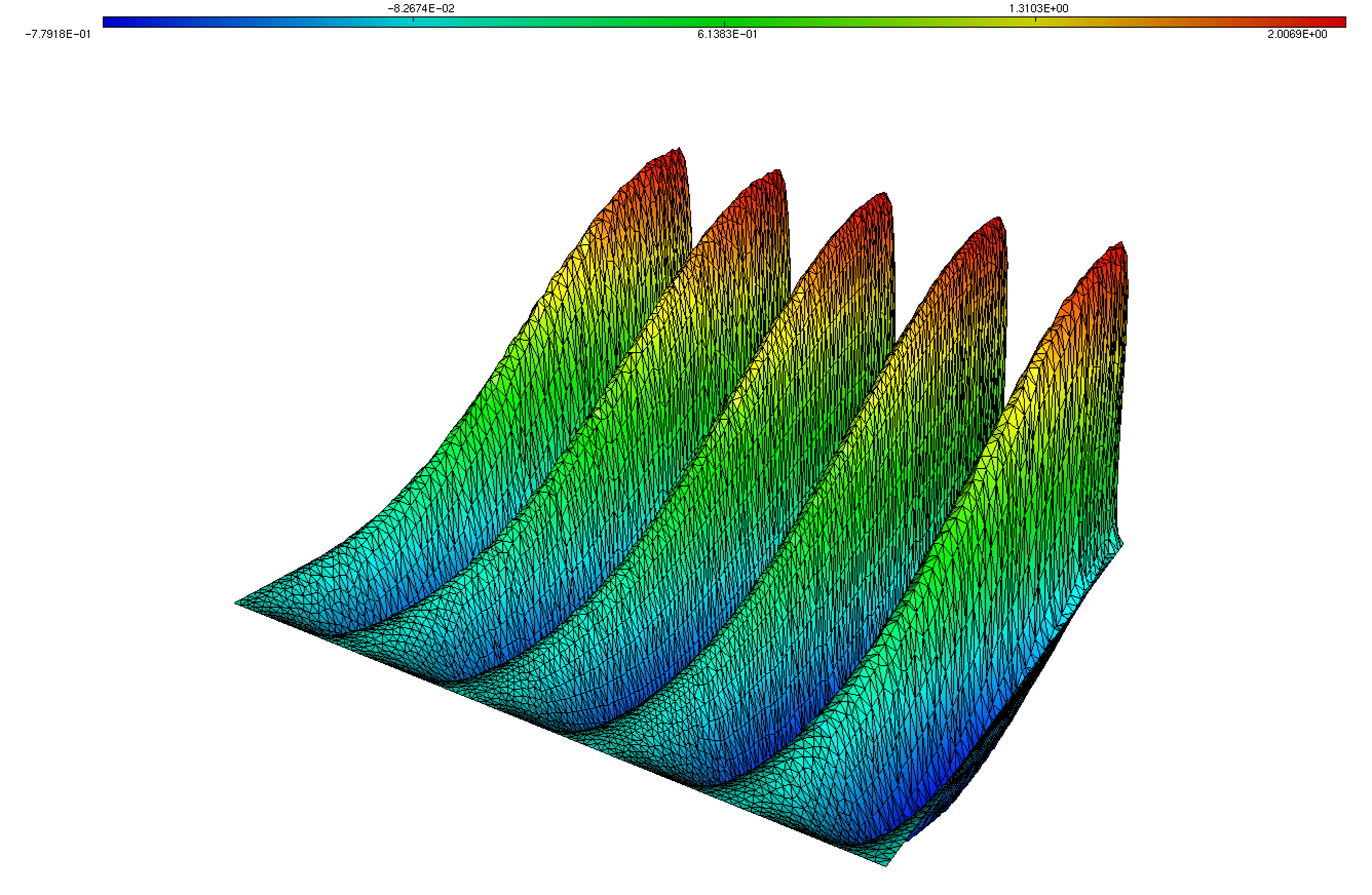}} \qq
    \subfigure{\includegraphics[scale = .1515]{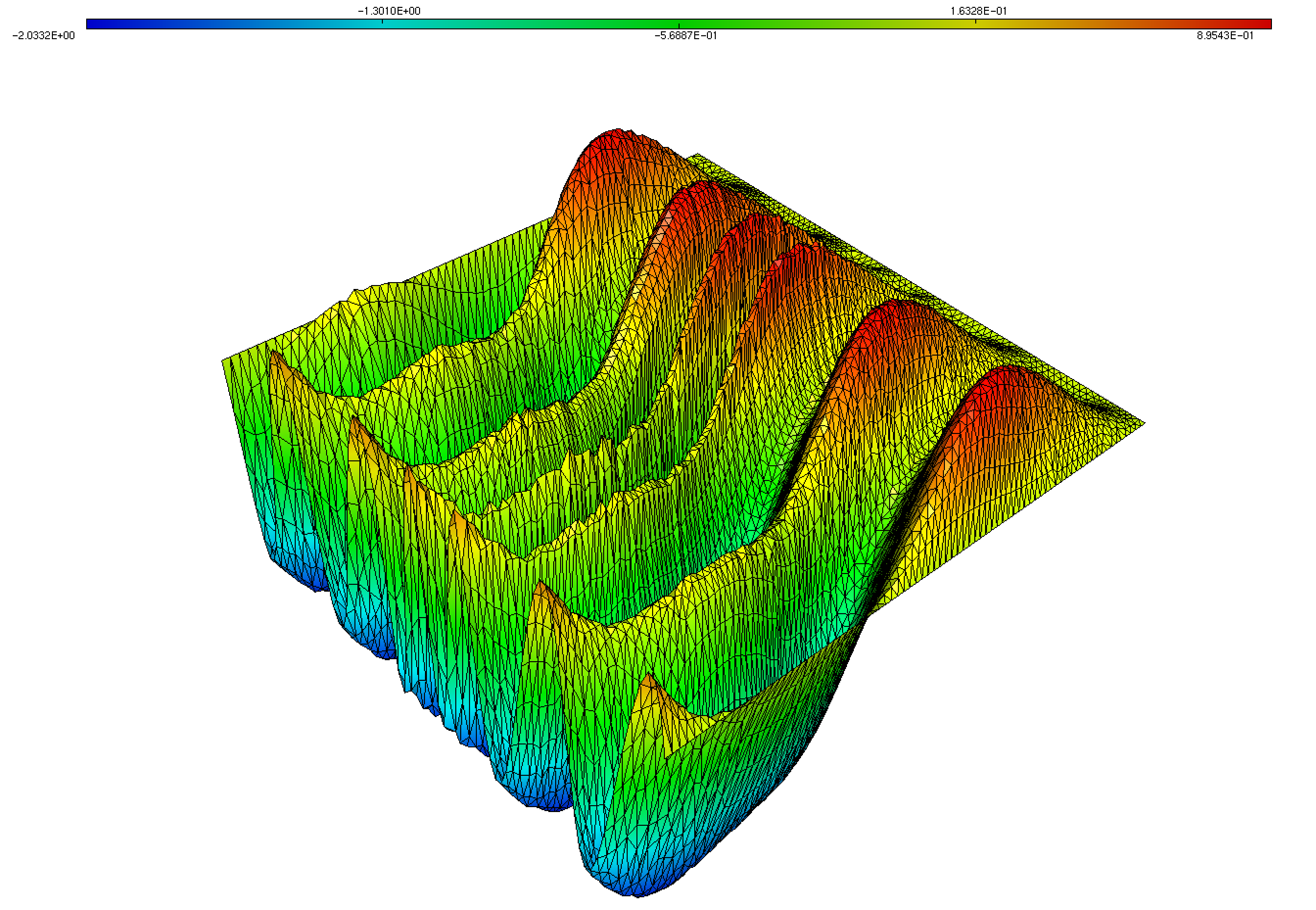}}
    \caption{The numerical solutions of Example 1 (left) and Example 2 (right).}
    \label{fig:solution_2D}
\end{figure}

\begin{figure}[!http]
\centering
    \subfigure[Example 1.]
    {\includegraphics[scale = .32 ]{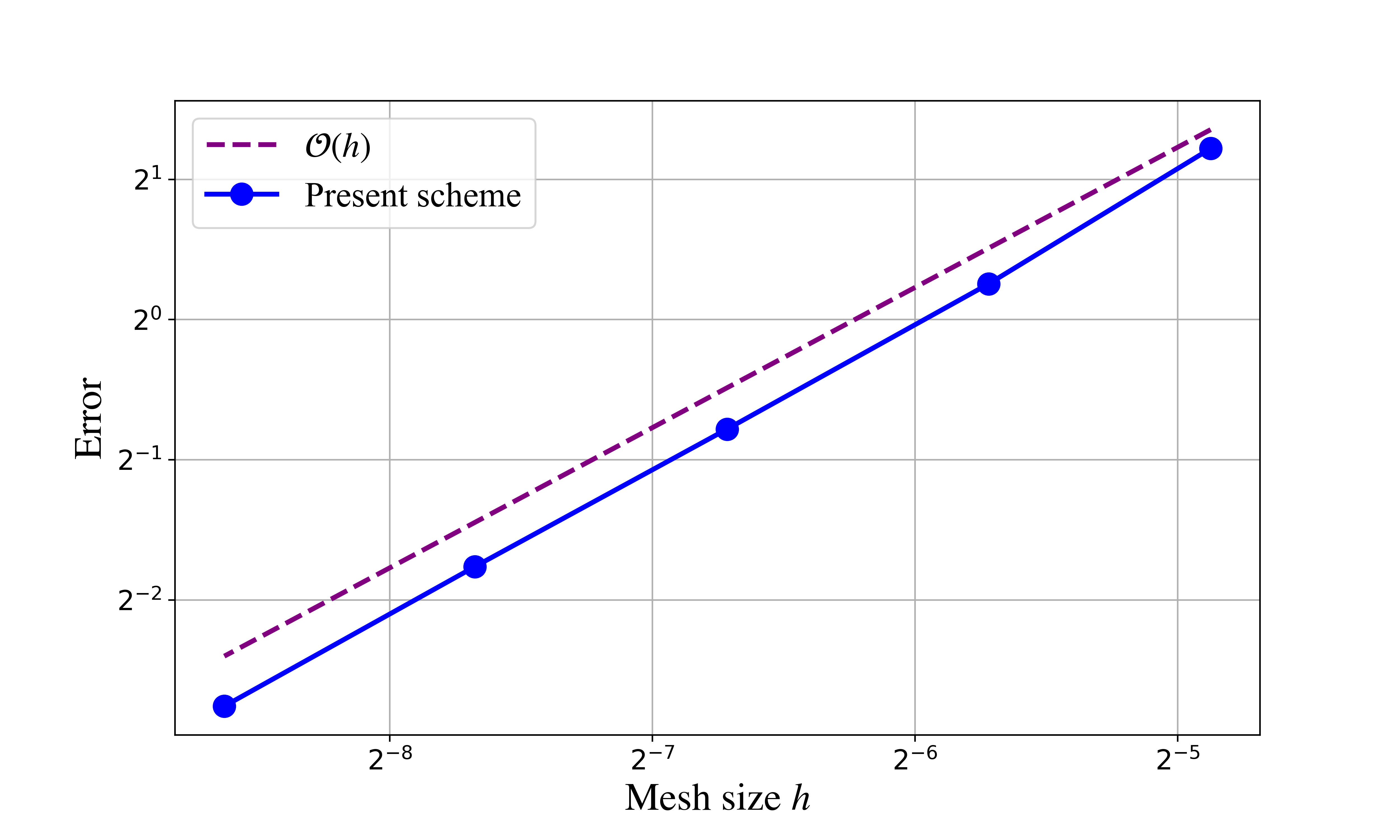}} 
    \subfigure[Example 2.]
    {\includegraphics[scale = .32 ]{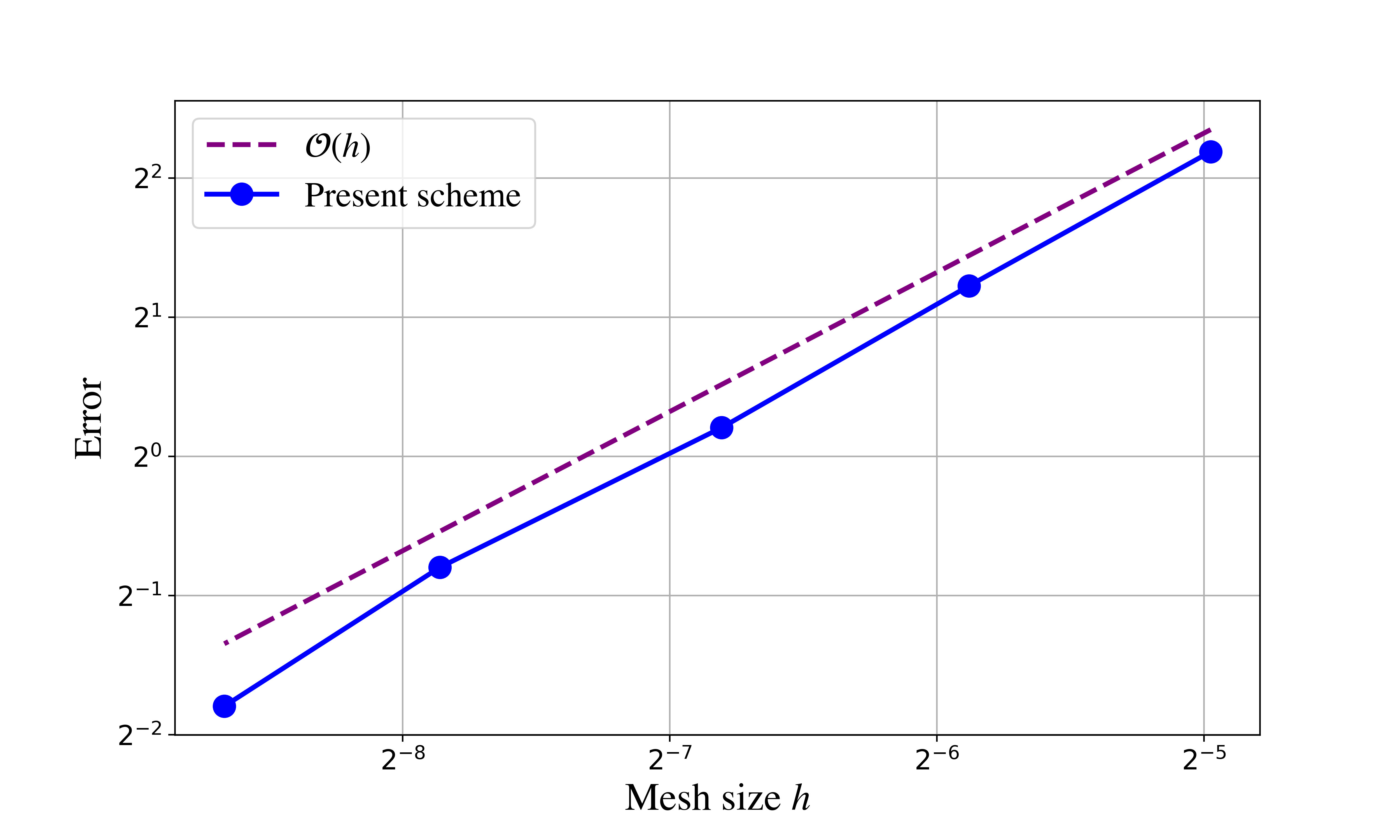}}
    \caption{The errors of the numerical scheme and the corresponding convergence rates in Examples 1 and 2.}
    \label{fig:error_2D}
\end{figure}

\subsection{3D space-time example}

\noindent\textbf{Example 3.} In this example, we will simulate an advection-diffusion problem that involves moving subdomains in the 3D space-time domain. This example is modified from an experiment performed in \cite{LSV2024} using a time-stepping scheme. Figure~\ref{fig:model_3D} illustrates the discretized space-time domain $Q_T$, where the interface undergoes a rotation around the $t$-axis over the time interval $(0, 1)$ with a velocity $\vb = (-2 \pi y, 2 \pi x)^\transpose$. The coefficient function $\kappa$ is given by $(\kappa_1, \kappa_2) = (2,1)$.

The following relative error is computed
\begin{equation*}
    \widetilde E_u = \dfrac{\norm{\nabla u_h -\nabla u_{ref}}_{\LLs^2\left(Q_T\right)}}{\norm{\nabla u_{ref}}_{\LLs^2\left(Q_T\right)}},
\end{equation*}
allowing us to assess the difference between the discrete solution $u_h$ and a reference solution $u_{ref} \in \Vs_h$. Here, $u_{ref}$ is the solution to the problem \eqref{eq: discrete vf} using a fine mesh of \( Q_T \) with 181820 tetrahedra. On the other hand, to ensure consistency with the preceding numerical examples, we will also assess the error and convergence rate in this example using the norm $\norm{u_{ref} - u_h}_{\Ys}$. Figure~\ref{fig:error_3D} portrays the convergence rates, which are gradually approaching $\mathcal{O}\left(h\right)$. These results again confirm the theoretical bound \eqref{eq: error estimate H1.1} of our proposed scheme.

\begin{figure}[htp]
\centering
    \subfigure{\begin{tikzpicture}
        \def\R{0.6}
        \def\Ri{2.5}
        \def\anga{60}
        \def\angaa{35}
        \coordinate (O) at (0,0);
        \coordinate (O1) at (1,0);
        \coordinate (R1) at (\anga:\Ri);
        \coordinate (X1) at (\angaa:{\Ri/cos(\anga-\angaa)});
        \coordinate (O2) at (-0.87,-0.5);
        \draw[fill = cyan!30] (0,0) circle (2.5cm);
        \draw[fill = black] (0,0) circle(0.02cm);
        \draw[fill = red!70, line width = 0.2mm] (O1) circle (\R);
        \draw[dashed] (O) -- (R1);
        \leftAngle{X1}{R1}{O}{0.25}
        \draw[->, line width = 0.25mm] (O) ++(\anga:\Ri) -- ++(\anga + 90: 1.5) node[midway,above] {$\vb$};
        \draw[fill = red!70] (O2) circle(\R);
        \draw[->, line width = 0.25mm] (0.7,0.9) arc(45:175:0.8) node[midway,above] {$\mathrm{\textbf{w}}(t)$};
        \node at (1,0) {$\Om_1(t)$};
        \node at (1,-1.7) {$\Om_2(t)$};
        \node at (3,-1) {$\Om$};
    \end{tikzpicture}} \qqq
    \subfigure{\includegraphics[scale = .25]{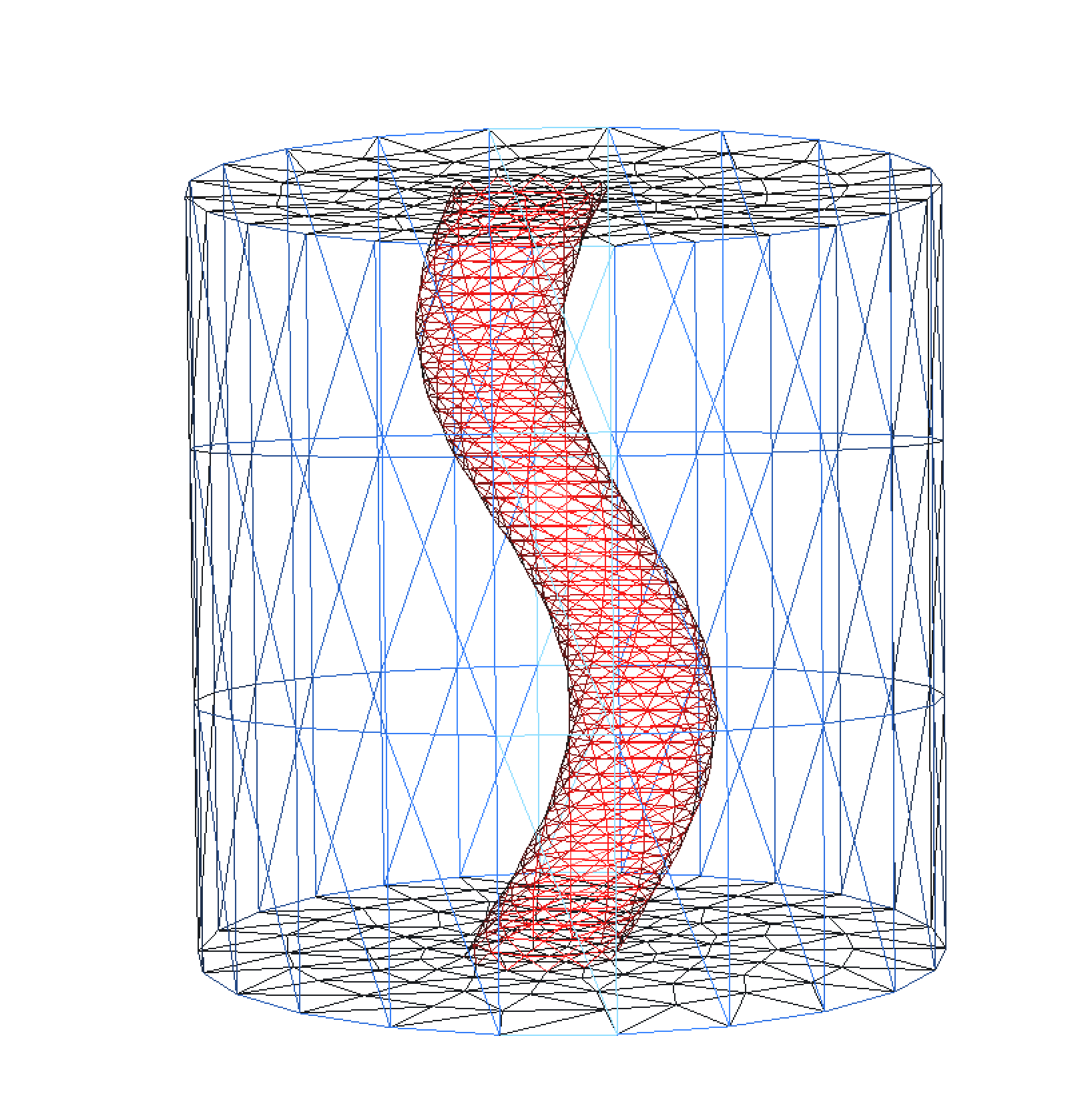}}
    \caption{The domain $\Om$ consisting of two subdomains $\Om_1(t)$ and $\Om_2(t)$  moving at velocity $\vb = (- 2 \pi y, 2 \pi x)^\transpose$ (left) and the discretized space-time domain $Q_T$ (right) in Example 3.}
    \label{fig:model_3D}
\end{figure}

\begin{figure}[!htp]
    \centering
    \subfigure[The relative error $\widetilde E_u$.]{\includegraphics[scale = .35]{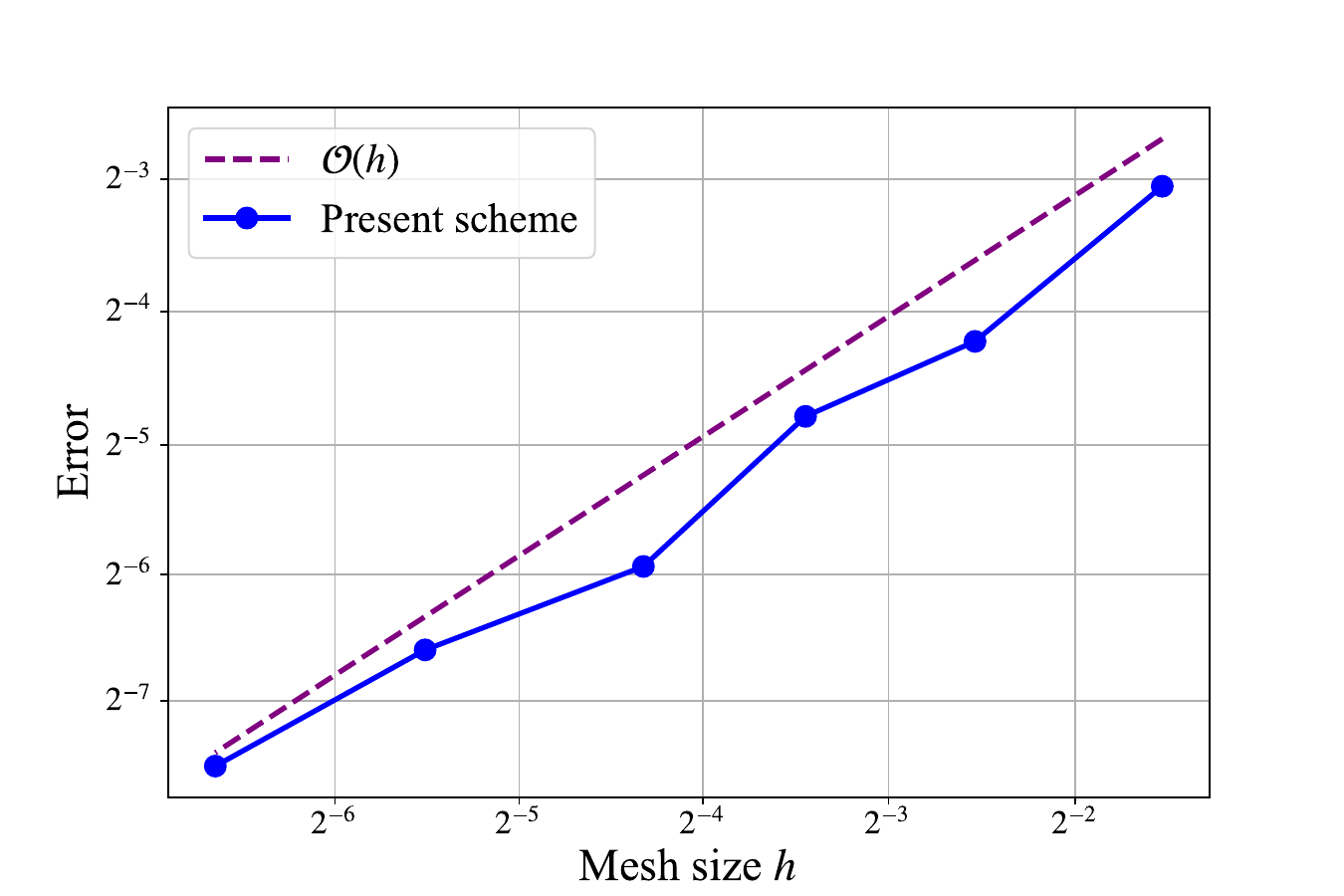}}
    \subfigure[The error $\norm{u_{ref} - u_h}_{\Ys}$.]{\includegraphics[scale = .35]{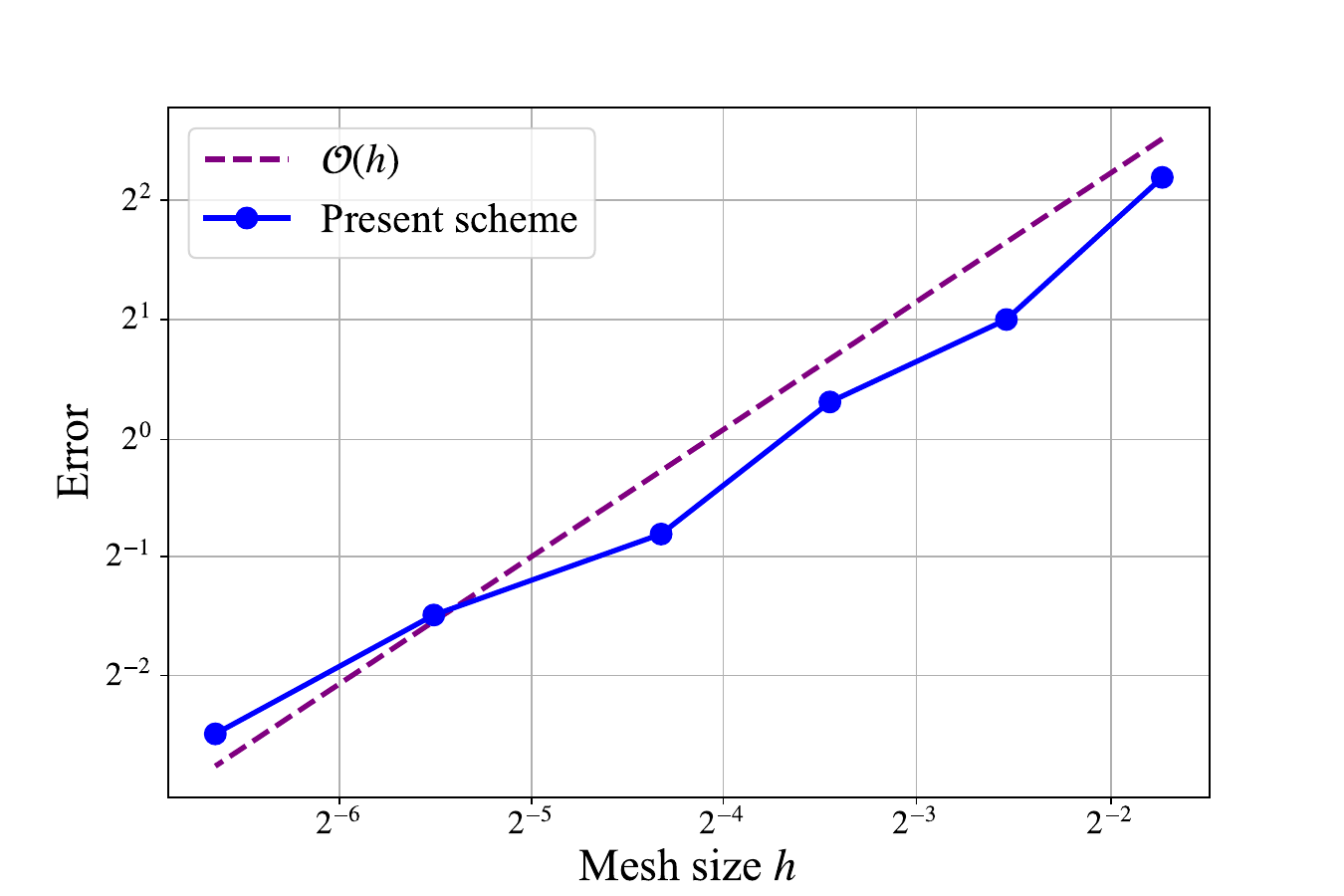}}
    \caption{The errors of the numerical scheme and the corresponding convergence rates in Example 3.}
    \label{fig:error_3D}
\end{figure}

\section{Conclusions}
\label{sec: conclusions}

Based on the interface-fitted strategy, we have proposed a new space-time finite element method to solve an advection-diffusion problem with moving subdomains. Unlike other space-time approaches, we treat the time and spatial variables similarly. We have shown an optimal error estimate with respect to a discrete energy norm under a specific globally low but locally high regularity condition. When the solution is less regular, the scheme is nearly optimal in one spatial dimension and sub-optimal in two spatial dimensions. We have also presented various numerical experiments illustrating the convergence behavior and the method's effectiveness. 

In the coming research, we intend to investigate the numerical analysis for moving-interface problems when the solution is less regular, let us say $u\in \Hs^{2,1}\left(Q_1 \cup Q_2\right)$. This situation is reasonable and more practical since the formulation does not invoke the second-order derivative of $u$ with respect to time. In addition, we would extend the proposed finite element scheme to solve moving-interfaces problems with fully discontinuous advection and diffusion coefficients, as studied in \cite{LSV2021a, LSV2024}. A more straightforward setting in \cite{LSV2021b, LSV2022c} considering a discontinuous advection coefficient but a continuous diffusion coefficient also justifies an intensive investigation using the proposed interface-fitted finite element method, which might give rise to an optimal convergence rate under lower regularity assumptions.


\bibliographystyle{elsarticle-num} 
\bibliography{abrv_ref}
\end{document}